\documentclass[12pt, a4paper, reqno]{amsart}

\usepackage[utf8]{inputenc}
\usepackage{amsfonts,amssymb,amsxtra,amsthm,amsmath,amscd,mathrsfs,cite}
\usepackage[all]{xy}
\usepackage{enumerate}  
\usepackage{tikz}
\usepackage[normalem]{ulem}
\usepackage{soul}
\usepackage{color}

\setstcolor{red}

\makeatletter
\@namedef{subjclassname@2010}{\textup{2010} Mathematics Subject Classification}
\makeatother

\def \R {{\mathbb R}}

\renewcommand{\thefootnote}{\fnsymbol{footnote}}

\newtheorem{theorem}{Theorem}[section]
\newtheorem{lemma}[theorem]{Lemma}
\newtheorem{corollary}[theorem]{Corollary}

\newtheorem{proposition}[theorem]{Proposition}

\numberwithin{equation}{section}
\addtocounter{footnote}{1}

\def\rank {{\rm{rank}}}
\def \codim {{\rm{codim}}}

\begin{document}

\title[Multiple Gauss sums]{Multiple Gauss sums}

\author{Jianya Liu}
\address{Mathematical Research Center \& School of Mathematics, Shandong University, Jinan 250100, China}
\email{jyliu@sdu.edu.cn}

\author{Sizhe Xie}
\address{Mathematical Research Center, Shandong University, Jinan 250100, China}
\email{szxie@mail.sdu.edu.cn}

{\let\thefootnote\relax\footnotetext{Mathematics Subject Classification (2020): 
Primary 11L05 $\cdot$ Secondary 11P32, 11P55.}}

\keywords{Gauss sum, exponential sum, Hardy--Littlewood circle method, Birch--Goldbach problem.}

\maketitle

\begin{abstract}
A multiple Gauss sum is a complete multiple exponential sum twisted by Dirichlet characters. 
We prove a new bound for multiple Gauss sums and, as an application, improve previous results 
in the Birch--Goldbach problem. Let $F_1, \ldots, F_R \in \mathbb{Z}[x_1, \ldots, x_n]$ be forms 
with differing degrees, with $D$ being the highest degree, and let $\boldsymbol{F} = (F_1, \ldots, F_R)$ 
be nonsingular. We prove that the system $\boldsymbol{F}(\boldsymbol{x})=\mathbf{0}$ is solvable 
in primes provided that $n \geq D^2 4^{D+2} R^5$.   
\end{abstract}

\section{Introduction and statement of results} 

\subsection{Known results} 
Let $\boldsymbol{F}=(F_1,\ldots,F_R)$ be a system of forms, where $F_1, \ldots, F_R \in \mathbb{Z}[x_1, \ldots, x_n]$ 
are homogeneous polynomials with integer coefficients. Let $\boldsymbol{a} \in \mathbb{Z}^R $ and $q \in \mathbb{N} $ 
satisfy $(a_1,\ldots,a_R,q)=1$, and let $\boldsymbol{\chi}=(\chi_1,\ldots, \chi_n)$ be a system of Dirichlet characters modulo $q$. 
We study multiple Gauss sums defined by
\begin{align}\label{exp with chi}
        C_{\boldsymbol{F}}(q, \boldsymbol{a}; \boldsymbol{\chi})
        =\sum_{\boldsymbol{h} \bmod q}{\chi}_1(h_1) \cdots {\chi}_n(h_n)
        e\bigg(\frac{\boldsymbol{a}\cdot\boldsymbol{F}(\boldsymbol{h})}{q}\bigg).
\end{align}
Estimates for these sums are crucial in solving the Birch--Goldbach problem, which concerns solving 
the system of equations
\begin{align}\label{bfF=bf0}
\boldsymbol{F}(x_1, \ldots, x_n) = \mathbf{0}
\end{align}
in primes. Non-trivial bounds for $C_{\boldsymbol{F}}(q, \boldsymbol{a}; \boldsymbol{\chi})$ 
produce savings from finite places that, via the saving-transfer method, can be transferred to the 
infinite place, enabling successful treatment of enlarged major arcs in the circle method. For such 
applications see \cite{LiuXieSavTra}.

When $n=R=1$ and $F(x)=x$, the sum \eqref{exp with chi} reduces to the classical Gauss sum. 
For a one-variable monomial $F(x)=x^d$, Vinogradov \cite[Chap. 6, Exercise 14]{Vin} used the 
multiplicativity and periodicity of $\chi$ to obtain square root cancellation for
\begin{align*}\label{exp1/1/k} 
C_{x^d}(q, a; \chi) = \sum_{h=1}^q \chi(h) e\bigg(\frac{ah^d}{q}\bigg). 
\end{align*}
Cochrane and Zheng \cite{CocZhe} estimated sums for general one-variable polynomials $F(x)$ of degree $d$, proving that 
\begin{equation}\label{CocZhe}
|C_{F}(p^t, a; \chi)|\leq 4dp^{t(1-\frac{1}{d+1})} 
\end{equation} 
for any prime powers $p^t$. 
Very recently, Cochrane and Granville \cite{CocGra} improved \eqref{CocZhe} to 
\begin{align*}
|C_{F}(p^t, a; \chi)|\leq Cp^{t(1-\frac{1}{d})}, 
\end{align*} 
where $C$ is an absolute constant. 
In fact they established results for general summand $\chi(g(x))e(\frac{f(x)}{p^t})$ with $f,g$ rational functions over $\mathbb{Q}$.

The case \( n > 1 \) was first studied for prime moduli \( q = p \) by Fouvry and Katz \cite{FouKat} and by 
Fu \cite{Fu}, who obtained square-root savings of the form
\[
C_F(p, a; \boldsymbol{\chi}) \ll p^{\frac{n}{2} + \varepsilon},
\]
for any \( \varepsilon > 0 \).  For a survey on stratification methods in the study of exponential sums, see Bonolis, Kowalski, and Woo \cite{BonKowWoo}.

For general moduli \( q \), 
Fisher \cite{Fis} proposed an alternative strategy to separate $\chi(\cdot)$ from $e(\cdot)$, but it applies only to a 
limited class of polynomials. 
For general moduli \( q \) and a single form \( F \) of degree \( d \), Yamagishi \cite{Yam22} established the bound
\[
C_F(q, a; \boldsymbol{\chi}) \ll q^{n - \frac{n - \dim V_F^*}{2(2d-1)4^d} + \varepsilon},
\]
where \( V_F^* \) is the singular locus of the affine variety
\begin{align*}\label{def:V_F}
V_F = \{\boldsymbol{x} \in \mathbb{A}^n : F(\boldsymbol{x}) = 0\}.
\end{align*} 

Gauss sums in several variables can also be interpreted as trace functions; see 
Fouvry, Kowalski, and Michel \cite{FouKowMic} and Fouvry, Kowalski, Michel, 
and Sawin \cite{FouKowMicSaw} for ideas and results in this direction.

\subsection{Main results}   
We consider a system $\boldsymbol{F}=(F_1,\ldots,F_R)$ of forms in $n$ variables with differing degrees. 
Let $d$ be any degree appearing in the system and $r_d$ the number of forms of degree $d$. 
Write 
\begin{equation*}
\Delta:= \{d\in \mathbb{N} : \text{degree $d$ occurs in $\boldsymbol{F}$}\}.
\end{equation*}
For $d \in \Delta$ define the matrix
\[
J_{\boldsymbol{F}, d}(\boldsymbol{x}):=\left(\begin{array}{c}\nabla  F_{1,d}(\boldsymbol{x})\\ \vdots \\
\nabla F_{r_d,d}(\boldsymbol{x})\end{array}\right)
\]
and the affine algebraic variety
\[
S_d(n, \boldsymbol{F}):=\{\boldsymbol{x}\in\mathbb{A} ^{n}:\rank(J_{\boldsymbol{F}, d}(\boldsymbol{x}))<r_d\}. 
\]
Moreover, we set for $d \in \Delta$ that 
\begin{equation}\label{def Bd}
B_d(n, \boldsymbol{F}):=\dim S_d(n, \boldsymbol{F})
\end{equation}
in the sense of Birch. One sees that $B_d(n, \boldsymbol{F})<n$ for all $d$ if $\boldsymbol{F}$ is nonsingular.
When $r_d=0$, we shall take $B_d(n, \boldsymbol{F})=0$. 
For $1\le d\le D$ wirte
\begin{equation}\label{def sd}
    s_d(n, \boldsymbol{F}):=\sum_{i=d}^D \frac{2^{i-1}(i-1)r_i}{n-B_i(n, \boldsymbol{F})}.  
\end{equation}
One simply checks that $s_1(n, \boldsymbol{F})=\max_{d \in \Delta}s_d(n, \boldsymbol{F})$. 

Now we state our main results. 
In the following for a vector $\boldsymbol{a}= (a_1, \ldots, a_R)$, 
we will use the abbreviation that $\gcd\, (q, \boldsymbol{a}) = \gcd\, (q, a_1, \ldots, a_R)$. 

\begin{theorem}\label{prop gen Gau}
Let $\boldsymbol{F}=(F_1,\ldots,F_R)$ be a system of nonlinear forms in $\mathbb{Z}[x_1, \ldots, x_n]$ with highest degree $D$.   
Let \( \chi_1, \dots, \chi_n \) be Dirichlet characters modulo \( q \) respectively. 
For $\gcd \, (q, \boldsymbol{a})=1$ define $C_{\boldsymbol{F}}(q, \boldsymbol{a}; \boldsymbol{\chi})$ as in \eqref{exp with chi}.  
Then 
\begin{equation*}\label{bound gen Gau 1}
C_{\boldsymbol{F}}(q, \boldsymbol{a}; \boldsymbol{\chi}) 
\ll q^{n+\varepsilon}\min_{j \in \Delta} \bigg(\frac{\gcd\, (q, \boldsymbol{a}^{(j)}, \ldots,\boldsymbol{a}^{(D)}) }{q} \bigg)^{\frac{1}{4s_{2j}(4n, \boldsymbol{L})}},
\end{equation*}
with
\begin{equation*}
        \boldsymbol{L}(\boldsymbol{h},\boldsymbol{h}';\boldsymbol{j},\boldsymbol{j}')
        =\boldsymbol{G}(\boldsymbol{h};\boldsymbol{j})-\boldsymbol{G}(\boldsymbol{h};\boldsymbol{j}')-\boldsymbol{G}(\boldsymbol{h}';\boldsymbol{j})+\boldsymbol{G}(\boldsymbol{h}';\boldsymbol{j}')
\end{equation*}  
and $\boldsymbol{G}(\boldsymbol{h};\boldsymbol{j}) = \boldsymbol{F}(h_1j_1,\ldots,h_nj_n)$,
where, for any $j \in \Delta$, $\boldsymbol{a}^{(j)}=(a_{1,j},\ldots,a_{r_j,j})$  and $s_{2j}(4n, \boldsymbol{L})$ is defined in \eqref{def sd}.
\end{theorem}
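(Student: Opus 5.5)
The plan is to remove the characters by exploiting their multiplicativity, after which we will be left with an ordinary complete exponential sum in $4n$ variables for the system $\boldsymbol{L}$. Such sums will then be bounded via the Birch-type estimate for systems of differing degrees, whose exponents are exactly governed by $s_{d}(\cdot,\cdot)$ from \eqref{def sd}.

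\textbf{Step 1: introducing multiplicative shifts.} Only residues $h_i$ coprime to $q$ contribute. For any $\boldsymbol{j}$ with $\gcd(j_1\cdots j_n,q)=1$, the substitution $h_i\mapsto h_ij_i$ gives
\[
C_{\boldsymbol{F}}(q,\boldsymbol{a};\boldsymbol{\chi})=\overline{\chi_1}(j_1)\cdots\overline{\chi_n}(j_n)\sum_{\boldsymbol{h}}\chi_1(h_1)\cdots\chi_n(h_n)\,e\Big(\frac{\boldsymbol{a}\cdot\boldsymbol{G}(\boldsymbol{h};\boldsymbol{j})}{q}\Big).
\]
We will multiply by $\chi(\boldsymbol{j})$ and average over $\boldsymbol{j}\in((\mathbb{Z}/q)^\times)^n$. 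This expresses $\varphi(q)^n C$ as a double sum over $\boldsymbol{h},\boldsymbol{j}$ in which the character factor is $\chi(\boldsymbol{h})\chi(\boldsymbol{j})$.

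\textbf{Step 2: two applications of Cauchy--Schwarz.} First apply Cauchy--Schwarz in $\boldsymbol{h}$, which removes $\chi(\boldsymbol{h})$ and duplicates $\boldsymbol{j}$ into $\boldsymbol{j},\boldsymbol{j}'$. Then apply it in $(\boldsymbol{j},\boldsymbol{j}')$, which removes $\chi(\boldsymbol{j})\overline{\chi}(\boldsymbol{j}')$ and duplicates $\boldsymbol{h}$ into $\boldsymbol{h},\boldsymbol{h}'$. The result is
\[
|C|^4\ll q^{4n+\varepsilon}\cdot q^{-4n}\Big|\sum_{\boldsymbol{h},\boldsymbol{h}',\boldsymbol{j},\boldsymbol{j}'}e\Big(\frac{\boldsymbol{a}\cdot\boldsymbol{L}(\boldsymbol{h},\boldsymbol{h}';\boldsymbol{j},\boldsymbol{j}')}{q}\Big)\Big|,
\]
where the sum runs over units. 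The loss from $\varphi(q)$ versus $q$ is absorbed into $q^\varepsilon$. The coprimality conditions will be removed by Möbius inversion over divisors $e\mid q$ of each variable. This gives $O(q^\varepsilon)$ sums over sublattices, and each of these is a complete sum for a rescaled system of the same shape.

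\textbf{Step 3: the complete sum for $\boldsymbol{L}$.} $\boldsymbol{L}$ is a system of forms in $4n$ variables. A form of degree $d$ in $\boldsymbol{F}$ produces a form of degree $2d$ in $\boldsymbol{L}$, with coefficient vector still $\boldsymbol{a}^{(d)}$. We will invoke the complete-sum bound for systems of differing degrees, proved by Weyl differencing down from the top degree as in Birch and its differing-degree refinements:
\[
\sum_{\boldsymbol{x}\bmod q}e\Big(\frac{\boldsymbol{a}\cdot\boldsymbol{L}(\boldsymbol{x})}{q}\Big)\ll q^{4n+\varepsilon}\Big(\frac{\gcd(q,\boldsymbol{a}^{(j)},\ldots,\boldsymbol{a}^{(D)})}{q}\Big)^{1/s_{2j}(4n,\boldsymbol{L})}.
\]
The gcd appears because the degrees below $2j$ are killed by differencing, so only the coefficients of degrees $\ge j$ in $\boldsymbol{F}$ matter. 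Taking fourth roots yields the exponent $\frac{1}{4s_{2j}(4n,\boldsymbol{L})}$, and minimizing over $j\in\Delta$ gives the theorem.

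\textbf{Main obstacle.} I expect the hardest step to be the complete-sum bound in Step 3 for arbitrary composite $q$, with the gcd involving only the top-degree coefficients. One route is to deduce it from the Weyl-differencing dichotomy in a standard way. If the sum is large, then many differenced points satisfy the multilinear congruences, and this forces the rank of $J_{\boldsymbol{L},2j}$ to drop on a set of dimension exceeding $B_{2j}$, unless $q/\gcd$ is small. The contributions of the different degree levels combine additively, which is exactly the shape of $s_{2j}$. A second source of difficulty is making the Möbius reduction uniform. I would handle it by noting that rescaling variables by divisors of $q$ only multiplies the coefficients $\boldsymbol{a}$ and changes $q$ to $q/e$. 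The final bound is monotone under these changes up to $q^\varepsilon$, once we check that the gcd ratio transforms favorably.
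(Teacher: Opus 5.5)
Your overall plan is the paper's: shift $h_i\mapsto h_ij_i$ and average over unit $\boldsymbol{j}$, apply Cauchy--Schwarz twice to remove the characters and produce $\boldsymbol{L}$, then invoke the complete-sum bound of Browning--Heath-Brown and take fourth roots. Their Lemma 8.2 is exactly the estimate in your Step 3, so your ``main obstacle'' need not be reproved. Doing Cauchy--Schwarz in $\boldsymbol{h}$ first rather than in $\boldsymbol{j}$ is immaterial, since $\boldsymbol{G}(\boldsymbol{h};\boldsymbol{j})=\boldsymbol{G}(\boldsymbol{j};\boldsymbol{h})$. There is a small slip in Step 1: writing $\chi(\boldsymbol{h})=\chi_1(h_1)\cdots\chi_n(h_n)$, the shift gives $C=\chi(\boldsymbol{j})\sum_{\boldsymbol{h}}\chi(\boldsymbol{h})e(\boldsymbol{a}\cdot\boldsymbol{G}(\boldsymbol{h};\boldsymbol{j})/q)$, with $\chi$ rather than $\overline{\chi}$, and one simply sums this identity over $\boldsymbol{j}$.

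The step that does not hold up is removing the coprimality conditions by M\"obius inversion. With $4n$ independent conditions you get $4n$ independent divisors $t_i\mid q$ and sublattice sums of $e(\boldsymbol{a}\cdot\boldsymbol{L}(t_1y_1,\ldots,t_{4n}y_{4n})/q)$. This diagonally rescaled system is not ``$\boldsymbol{L}$ with $\boldsymbol{a}$ multiplied and $q$ reduced'': different monomials pick up different products of the $t_i$. Even when all $t_i=t$, the degree-$2d$ block becomes $t^{2d}\boldsymbol{a}^{(d)}$, which changes the relevant gcd's in a degree-dependent way. More seriously, the implied constant in the Birch-type complete-sum bound depends on the system of forms. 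In Birch's argument the denominator of the rational approximation is a Jacobian minor whose size grows with the coefficients. Applying the bound to systems whose coefficients grow with divisors of $q$ is therefore not uniform, and your claim that the bound is ``monotone up to $q^\varepsilon$'' is unsupported.

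The missing observation is that no M\"obius inversion is needed, because positivity lets you complete every sum. In your ordering, $\varphi(q)^nC=\sum_{\boldsymbol{h}}\chi(\boldsymbol{h})T(\boldsymbol{h})$ with $T(\boldsymbol{h})=\sum_{\boldsymbol{j}}\chi(\boldsymbol{j})e(\boldsymbol{a}\cdot\boldsymbol{G}(\boldsymbol{h};\boldsymbol{j})/q)$.

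After Cauchy--Schwarz over unit $\boldsymbol{h}$, bound $\sum^{*}_{\boldsymbol{h}}|T(\boldsymbol{h})|^2$ (sum over units) by $\sum_{\boldsymbol{h}\bmod q}|T(\boldsymbol{h})|^2$. On expanding, the characters $\chi(\boldsymbol{j})\overline{\chi}(\boldsymbol{j}')$ restrict only the outer variables, while the inner $\boldsymbol{h}$-sum now runs over all residues.

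After the second Cauchy--Schwarz over unit $(\boldsymbol{j},\boldsymbol{j}')$, extend that outer sum of squares to all residues as well. The powers of $\varphi(q)$ cancel exactly, giving $|C|^4\le\sum_{\boldsymbol{h},\boldsymbol{h}',\boldsymbol{j},\boldsymbol{j}'\bmod q}e(\boldsymbol{a}\cdot\boldsymbol{L}(\boldsymbol{h},\boldsymbol{h}';\boldsymbol{j},\boldsymbol{j}')/q)$. This is an unrestricted complete sum for the fixed system $\boldsymbol{L}$, to which Lemma 8.2 of Browning--Heath-Brown applies directly. It is precisely what the paper does, with the two Cauchy--Schwarz steps in the other order.
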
 

The proof is deferred to \S2. Although the statement of the theorem may appear intricate, 
its practical application will be demonstrated in the proof of Lemma \ref{lem/bound/gauss}. 
Furthermore, the theorem yields the following more transparent formulation.  

\begin{corollary}\label{thm2}
Under assumptions of Theorem \ref{prop gen Gau}, if $\boldsymbol{F}$ is nonsingular and $D$ is the 
highest degree in $\boldsymbol{F}$, then
\begin{equation*}\label{bound gen Gau 2}
C_{\boldsymbol{F}}(q, \boldsymbol{a}; \boldsymbol{\chi}) 
\ll q^{n+\varepsilon}\min_{j \in \Delta} \bigg(\frac{\gcd \, (q, \boldsymbol{a}^{(j)}, \ldots,\boldsymbol{a}^{(D)}) }{q} 
\bigg)^{\frac{n-R}{2^{2D+1}(2D-1)(R+1)R}}.
\end{equation*}
\end{corollary}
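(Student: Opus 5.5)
Corollary \ref{thm2} follows from Theorem \ref{prop gen Gau} once we bound $s_{2j}(4n,\boldsymbol{L})$ from above. Since $s_{2j}\le s_1$, it suffices to show
\[
4 s_{1}(4n,\boldsymbol{L}) \le \frac{2^{2D+1}(2D-1)(R+1)R}{n-R}.
\]
Recall that $\boldsymbol{L}$ is a system of forms in the $4n$ variables $(\boldsymbol{h},\boldsymbol{h}',\boldsymbol{j},\boldsymbol{j}')$. Each $F_{i,d}$ of degree $d$ produces a form $L_{i,d}$ of degree $2d$, so the degree set of $\boldsymbol{L}$ is $2\Delta$ and $r_{2d}(\boldsymbol{L})=r_d(\boldsymbol{F})$. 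With $D$ replaced by $2D$ and only even degrees present, definition \eqref{def sd} gives
\[
s_1(4n,\boldsymbol{L})=\sum_{d\in\Delta}\frac{2^{2d-1}(2d-1)r_d}{4n-B_{2d}(4n,\boldsymbol{L})}.
\]
The whole task is therefore to prove a lower bound for the codimensions $4n-B_{2d}(4n,\boldsymbol{L})$ in terms of $n-R$.

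\textbf{Main step: bounding $B_{2d}(4n,\boldsymbol{L})$.} We have $\partial L_{i,d}/\partial h_k = j_k\,\partial_kF_{i,d}(\boldsymbol{h}\circ\boldsymbol{j}) - j'_k\,\partial_kF_{i,d}(\boldsymbol{h}\circ\boldsymbol{j}')$, where $\circ$ is the coordinatewise product. Restrict attention to the $\boldsymbol{h}$-block of the Jacobian and fix $\boldsymbol{j}'=\mathbf{0}$. The block becomes $J_{\boldsymbol{F},d}(\boldsymbol{h}\circ\boldsymbol{j})\,\mathrm{diag}(\boldsymbol{j})$. At a point where all $j_k\neq0$, rank deficiency of this block forces $\boldsymbol{h}\circ\boldsymbol{j}\in S_d(n,\boldsymbol{F})$.

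To handle the general case, I would first prove a fibre-dimension count for $S_{2d}(4n,\boldsymbol{L})$. Stratify by which coordinates of $\boldsymbol{j},\boldsymbol{j}'$ vanish, and use symmetric arguments with the $\boldsymbol{h}'$-, $\boldsymbol{j}$- and $\boldsymbol{j}'$-blocks. On each stratum, one of the maps $(\boldsymbol{h},\boldsymbol{j})\mapsto\boldsymbol{h}\circ\boldsymbol{j}$, or its analogue, lands in $S_d$ with fibres of dimension at most $n$. This should give
\[
4n-B_{2d}(4n,\boldsymbol{L})\ \ge\ n-B_d(n,\boldsymbol{F}),
\]
possibly weakened by a constant factor. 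I expect this step to be the main obstacle. The diagonal scaling degenerates on coordinate hyperplanes, and $\boldsymbol{L}$ is a difference of four terms, so generic rank can drop on exceptional loci. If the clean inequality fails, I would settle for a bound of the form $4n-B_{2d}\ge (n-B_d)/c$ with an absolute constant $c$ that the final constant can absorb.

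\textbf{Nonsingularity and summation.} For nonsingular $\boldsymbol{F}$, I would use the standard Birch-type bound for systems: $n-B_d(n,\boldsymbol{F})\ge (n-R)/(R+1)$, or some comparable bound, obtained by intersecting $S_d$ with the variety $\boldsymbol{F}=\mathbf{0}$ and using that the singular locus of $\boldsymbol{F}$ is trivial. The intersection has codimension at most $R$, which is where the factor $R+1$ comes from. Summing over $d\in\Delta$ with $r_d\le R$ and $\sum_d r_d=R$, and using $2^{2d-1}(2d-1)\le 2^{2D-1}(2D-1)$, we get
\[
4s_1(4n,\boldsymbol{L})\le 4\cdot 2^{2D-1}(2D-1)\frac{R(R+1)}{n-R}=\frac{2^{2D+1}(2D-1)R(R+1)}{n-R}.
\]
This matches the exponent in the corollary, provided the codimension comparison in the main step holds without loss.
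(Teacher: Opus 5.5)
\textbf{There is a genuine gap, at exactly the step you flag as the main obstacle.} Your reduction to an upper bound for $4s_1(4n,\boldsymbol{L})$ is correct. However, the lossless comparison $4n-B_{2d}(4n,\boldsymbol{L})\ge n-B_d(n,\boldsymbol{F})$ is never proved, and the mechanism you sketch cannot prove it, for two reasons.

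\begin{itemize}
\item Fixing $\boldsymbol{j}'=\mathbf{0}$ only describes a slice of $S_{2d}(4n,\boldsymbol{L})$. It gives no upper bound on the dimension of that set.
\item On strata where some $j_k$ vanish, rank deficiency of $J_{\boldsymbol{F},d}(\boldsymbol{h}\circ\boldsymbol{j})\,\mathrm{diag}(\boldsymbol{j})$ does not force $\boldsymbol{h}\circ\boldsymbol{j}\in S_d(n,\boldsymbol{F})$. It only puts $\boldsymbol{h}\circ\boldsymbol{j}$ in the singular locus of the system restricted to a coordinate subspace (the sets $T_m$ in the proof of Proposition~\ref{prop/dim}), and these can be much larger. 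For example, take $F=\sum_{i=1}^k x_ix_{k+i}$, so $S_2=\{\mathbf{0}\}$. On $\{j_{k+1}=\dots=j_{2k}=0\}$ the whole $\boldsymbol{h}$-block vanishes identically, while $\boldsymbol{h}\circ\boldsymbol{j}$ sweeps out a $k$-dimensional space.
\end{itemize}

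Controlling the $T_m$ is the real content of the paper. Using the incidence varieties $X_{k,k+2}$ and Lemma~\ref{lem/biho}, it proves $\dim T_k\le\dim T_{k+1}+R$. This gives $\codim V^*_{\boldsymbol{G},1}\ge\codim V^*_{\boldsymbol{F}}/(R+1)$, and one then passes from $\boldsymbol{G}$ to $\boldsymbol{L}$ (Lemma~\ref{dim/rel}). Your fallback with an unspecified constant $c$ would not give the explicit exponent in the corollary.

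\textbf{You have also put the factor $R+1$ in the wrong place.} Nonsingularity gives $S_d(n,\boldsymbol{F})\subseteq V^*_{\boldsymbol{F}}$ and $\dim V^*_{\boldsymbol{F}}\le R$, hence directly $n-B_d(n,\boldsymbol{F})\ge n-R$. Your intersection argument with $\boldsymbol{F}=\mathbf{0}$ proves exactly this, with no $R+1$. In a correct proof the loss $R+1$ is paid in the comparison between $\boldsymbol{F}$ and $\boldsymbol{L}$. The paper combines three ingredients:
\begin{itemize}
\item $s_{2j}(4n,\boldsymbol{L})\le s_1(4n,\boldsymbol{L})\le 2^{2D-1}(2D-1)R/(4n-\dim V^*_{\boldsymbol{L}})$, from Lemma~\ref{Lem/7/1};
\item Lemma~\ref{dim/rel};
\item $\dim V^*_{\boldsymbol{F}}\le R$.
\end{itemize}

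\textbf{How to repair your route.} Your degree-by-degree approach can be fixed the same way. The degree-$2d$ part of $\boldsymbol{L}$ is exactly the $\boldsymbol{L}$-system built from the degree-$d$ subsystem of $\boldsymbol{F}$, which has $r_d$ forms. Applying Lemma~\ref{dim/rel} to that subsystem gives $4n-B_{2d}(4n,\boldsymbol{L})\ge (n-B_d(n,\boldsymbol{F}))/(r_d+1)\ge (n-R)/(R+1)$. Summing over $d$ then yields the stated exponent.
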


The proof of Corollary \ref{thm2} depends not only on Theorem \ref{prop gen Gau}, but also on the lemmas in \S3.  
Therefore, we postpone the proof of Corollary \ref{thm2} to \S3.  

When $\boldsymbol{F}$ has only one degree, we immediately get from Corollary \ref{thm2} a more explicit upper bound as follows.  

\begin{corollary}Under assumptions of Theorem \ref{prop gen Gau}, if $\boldsymbol{F}$ is nonsingular and $D$ is the 
only degree in $\boldsymbol{F}$, then
\begin{equation*}
C_{\boldsymbol{F}}(q, \boldsymbol{a}; \boldsymbol{\chi}) 
\ll q^{n+\varepsilon-\frac{n-R}{2^{2D+1}(2D-1)(R+1)R}}.
\end{equation*}
\end{corollary}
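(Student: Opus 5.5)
This corollary is the special case $\Delta=\{D\}$ of Corollary \ref{thm2}, so I would deduce it directly from that result.

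When $D$ is the only degree occurring in $\boldsymbol{F}$, we have $r_D=R$ and $\boldsymbol{a}^{(D)}=\boldsymbol{a}$. The minimum over $j\in\Delta$ then has the single term $j=D$. In that term the gcd is
\[
\gcd\,(q,\boldsymbol{a}^{(D)})=\gcd\,(q,\boldsymbol{a})=1,
\]
by the hypothesis of Theorem \ref{prop gen Gau}. Substituting this into the bound of Corollary \ref{thm2} gives
\[
C_{\boldsymbol{F}}(q,\boldsymbol{a};\boldsymbol{\chi})\ll q^{n+\varepsilon}\Big(\frac{1}{q}\Big)^{\frac{n-R}{2^{2D+1}(2D-1)(R+1)R}},
\]
which is exactly the stated estimate.

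The one point to check is the degenerate case $n\le R$. There the exponent is nonpositive, so the claimed bound is at least of size $q^{n+\varepsilon}$ and follows from the trivial estimate $|C_{\boldsymbol{F}}(q,\boldsymbol{a};\boldsymbol{\chi})|\le q^n$. Apart from this there is no obstacle; all the substance lies in Corollary \ref{thm2}, which I take as given.
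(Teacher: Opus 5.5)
Your proposal is correct and matches the paper's argument, which derives this corollary immediately from Corollary \ref{thm2}. With $\Delta=\{D\}$ the minimum has the single term $j=D$, and $\gcd(q,\boldsymbol{a}^{(D)})=\gcd(q,\boldsymbol{a})=1$; your separate treatment of $n\le R$ is harmless but unnecessary, since Corollary \ref{thm2} already covers that case with the same exponent.
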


The main results presented above have direct implications for the Birch--Goldbach problem.  
Consider a system of forms   
$F_1,\dots,F_R \in \mathbb{Z}[x_1,\dots,x_n]$ with differing degrees, and let \(D\) denote the largest of these degrees.  
Let \(\boldsymbol{F}=(F_1, \dots, F_R)\) and assume that \(\boldsymbol{F}\) is nonsingular.  
Then, as will be established in Theorem \ref{thm:main}, the system  
$\boldsymbol{F}(\boldsymbol{x}) = \mathbf{0}$ 
is solvable in primes provided that $n \ge D^{2} 4^{D+2} R^{5}.$

\section{Proof of Theorem~\ref{prop gen Gau}} 

The proof uses the multiplicativity and periodicity of Dirichlet characters. 
The same idea was also applied in Vinogradov \cite[Chap. 6, Exercise 14]{Vin} and Yamagishi \cite[Lemma 7.4]{Yam22}. 

\begin{proof}
For any $\boldsymbol{j} \in \{(\mathbb{Z}/q\mathbb{Z})^\times\}^n $, we have 
\begin{align*}
        C_{\boldsymbol{F}}(q, \boldsymbol{a}; \boldsymbol{\chi}) 
        =\sum_{\boldsymbol{h} \bmod q}{\chi}_1(h_1j_1) \cdots {\chi}_n(h_nj_n)
        e\bigg(\frac{\boldsymbol{a}\cdot\boldsymbol{F}(h_1j_1,\ldots,h_nj_n)}{q}\bigg). 
\end{align*}
Summing over all $\boldsymbol{j}$ gives
\begin{align*}
\varphi^n(q) C_{\boldsymbol{F}}
&=\sum_{\boldsymbol{j}}\sum_{\boldsymbol{h} \bmod q} \chi_1(h_1)\chi_1(j_1) \cdots \chi_n(h_n)\chi_n(j_n)  
        e\bigg(\frac{\boldsymbol{a}\cdot\boldsymbol{F}(h_1j_1,\ldots,h_nj_n)}{q}\bigg) \\ 
&=\sum_{\boldsymbol{j}} \chi_1(j_1) \cdots \chi_n(j_n)\sum_{\boldsymbol{h} \bmod q}
\chi_1(h_1)\cdots \chi_n(h_n)
        e\bigg(\frac{\boldsymbol{a}\cdot\boldsymbol{G}(\boldsymbol{h};\boldsymbol{j})}{q}\bigg), 
\end{align*}
where $\boldsymbol{G}(\boldsymbol{h};\boldsymbol{j}) = \boldsymbol{F}(h_1j_1,\ldots,h_nj_n)$ 
and we used the multiplicative and periodic property of Dirichlet characters. By Cauchy's inequality,
\begin{align*}
\varphi^{2n}(q) |C_{\boldsymbol{F}} |^2
\leq \varphi^n(q) \sum_{\boldsymbol{j} \bmod q} \bigg| \sum_{\boldsymbol{h} \bmod q} 
\chi_1(h_1) \cdots \chi_n(h_n)
e\bigg(\frac{\boldsymbol{a}\cdot\boldsymbol{G}(\boldsymbol{h};\boldsymbol{j})}{q}\bigg) \bigg|^2. 
\end{align*}
The squared absolute value is
\begin{align*}
            =\sum_{\boldsymbol{h} \bmod q} \sum_{\boldsymbol{h}' \bmod q}
            \chi_1(h_1) \bar{\chi}_1(h_1') \cdots \chi_n(h_n)\bar{\chi}_n(h_n')
              e\bigg(\frac{\boldsymbol{a}\cdot(\boldsymbol{G}(\boldsymbol{h};\boldsymbol{j})-\boldsymbol{G}(\boldsymbol{h}';\boldsymbol{j}))}{q}\bigg), 
\end{align*}
so
\begin{align*}
\varphi^{2n}(q) |C_{\boldsymbol{F}} |^2 
\leq \varphi^n(q) 
            \ \sideset{}{^\ast} \sum_{\boldsymbol{h} \bmod{q}}
            \ \sideset{}{^\ast} \sum_{\boldsymbol{h}' \bmod{q}} 
            \bigg| \sum_{\boldsymbol{j} \bmod q} e\bigg(\frac{\boldsymbol{a}\cdot(\boldsymbol{G}(\boldsymbol{h};\boldsymbol{j})-\boldsymbol{G}(\boldsymbol{h}';\boldsymbol{j}))}{q}\bigg)  \bigg|.
\end{align*}
Applying Cauchy's inequality again yields
\begin{align*}
        \varphi^{4n}(q) |C_{\boldsymbol{F}}|^4 
        \leq \varphi^{4n}(q) \ \sideset{}{^\ast} \sum_{\boldsymbol{h} \bmod{q}}
            \ \sideset{}{^\ast} \sum_{\boldsymbol{h}' \bmod{q}} 
 \bigg| \sum_{\boldsymbol{j} \bmod q} e\bigg(\frac{\boldsymbol{a}\cdot(\boldsymbol{G}(\boldsymbol{h};\boldsymbol{j})-\boldsymbol{G}(\boldsymbol{h}';\boldsymbol{j}))}{q}\bigg)  \bigg|^2, 
\end{align*}
and hence
\begin{align}\label{no chi exp}
        |C_{\boldsymbol{F}} |^4 \leq \sum_{\boldsymbol{h} \bmod q} \sum_{\boldsymbol{h}' \bmod q} \sum_{\boldsymbol{j} \bmod q} \sum_{\boldsymbol{j}' \bmod q} e\bigg(\frac{\boldsymbol{a}\cdot\boldsymbol{L}(\boldsymbol{h},\boldsymbol{h}';\boldsymbol{j},\boldsymbol{j}')}{q}\bigg)
\end{align}
with
\begin{equation*}
        \boldsymbol{L}(\boldsymbol{h},\boldsymbol{h}';\boldsymbol{j},\boldsymbol{j}')
        =\boldsymbol{G}(\boldsymbol{h};\boldsymbol{j})-\boldsymbol{G}(\boldsymbol{h};\boldsymbol{j}')-\boldsymbol{G}(\boldsymbol{h}';\boldsymbol{j})+\boldsymbol{G}(\boldsymbol{h}';\boldsymbol{j}').
\end{equation*} 
The second application of Cauchy's inequality also guarantees the symmetry of variables. 
Note that the right-hand side of \eqref{no chi exp} contains no characters, so we can use results 
on complete exponential sums.
Here $\boldsymbol{L}$ is a system of the form 
$$
\{L_{i,2d}\}_{\substack{1\leq i \leq r_d \\ d \in \Delta}}
$$ 
in $4n$ variables.
By \cite[Lemma 8.2]{BroHB}, the right-hand side of \eqref{no chi exp} is
\begin{equation*}
 \ll q^{4n+\varepsilon}\min_{j \in \Delta} 
 \bigg(\frac{\gcd (q, \boldsymbol{a}^{(j)}, \ldots,\boldsymbol{a}^{(D)}) }{q} \bigg)^{\frac{1}{s_{2j}(4n, \boldsymbol{L})}}, 
\end{equation*}
where, for any $j \in \Delta$, $\boldsymbol{a}^{(j)}=(a_{1,j},\ldots,a_{r_j,j})$  and $s_{2j}(4n, \boldsymbol{L})$ is defined in \eqref{def sd}. 
This completes the proof. 
\end{proof}

\section{Geometric considerations}
For proving Corollary \ref{thm2} and applying Theorem \ref{prop gen Gau} in the Birch--Goldbach problem, we need some geometric considerations. 
\begin{lemma}\label{lem/biho}
    Let $f_1,\ldots, f_{r+1}\in \mathbb{A}[x_1,\ldots,x_n,y_1,\ldots,y_m] $ be bihomogeneous polynomials, that is  
    each $f_i$ is homogeneous in $\boldsymbol{x}$ and $\boldsymbol{y}$, respectively. 
    Let $X\subseteq \mathbb{P}^{n-1}\times \mathbb{P}^{m-1}  $ be defined by  $f_1,\ldots, f_{r}$ and  $Y\subseteq \mathbb{P}^{n-1}\times \mathbb{P}^{m-1}  $ be defined by  $f_1,\ldots, f_{r+1}$.
    Then
    \begin{equation*}
        \dim Y=\dim X \ \text{or} \ \dim X-1.
    \end{equation*}
\end{lemma}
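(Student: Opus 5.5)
The plan is to reduce the statement to Krull's principal ideal theorem applied to one irreducible component of $X$. One point must be settled first. If $f_{r+1}$ is constant in one group of variables, the conclusion can fail: in $\mathbb{P}^1\times\mathbb{P}^1$, take $X=\{p\}\times\mathbb{P}^1$ and let $f_{r+1}=\ell(\boldsymbol{x})$ be a linear form with $\ell(p)\neq 0$; then $Y=\emptyset$. So I will prove the lemma under the reading suitable for the applications: $f_{r+1}$ has bidegree $(a,b)$ with $a,b\ge 1$, and $\dim\emptyset=-1$.

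The upper bound $\dim Y\le\dim X$ is immediate from $Y\subseteq X$. For the lower bound, let $k=\dim X$, and choose an irreducible component $Z\subseteq X$ with $\dim Z=k$. It suffices to show that every irreducible component of $Z\cap V(f_{r+1})$ has dimension at least $k-1$, and that this intersection is nonempty whenever $k\ge 1$. Then $Y\supseteq Z\cap V(f_{r+1})$ gives $\dim Y\ge k-1$; if $k=0$, the bound $\dim Y\ge -1$ holds trivially.

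\emph{Dimension of components (Krull).} Work locally on the affine charts $U_{ij}=\{x_i\neq0,\ y_j\neq0\}\cong\mathbb{A}^{n-1}\times\mathbb{A}^{m-1}$. On such a chart, $f_{r+1}/(x_i^a y_j^b)$ is a regular function, and its zero set there is $V(f_{r+1})\cap U_{ij}$. Krull's Hauptidealsatz applied to $Z\cap U_{ij}$ says that each component of the zero set of this function on $Z\cap U_{ij}$ is either empty or has dimension at least $\dim Z-1$. Since the charts cover the space and dimension is local, every nonempty component of $Z\cap V(f_{r+1})$ has dimension $\ge k-1$.

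\emph{Nonemptiness (the main obstacle).} This is the step that genuinely uses $a,b\ge1$. I would pass to the Segre embedding $\sigma:\mathbb{P}^{n-1}\times\mathbb{P}^{m-1}\to\mathbb{P}^{nm-1}$ and replace $f_{r+1}$ by $g=f_{r+1}^{\,b}\cdot\ldots$. More cleanly, take $c=\operatorname{lcm}(a,b)$ and
$$g=f_{r+1}^{\,c/a\cdot c/b}\quad\text{adjusted to bidegree }(N,N),$$
or simply the product
$$f_{r+1}^{\,b}\cdot\prod(\text{monomials}),$$
chosen so that the zero set is unchanged on $Z$. The simplest route is the ampleness argument. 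The line bundle $\mathcal{O}(a,b)$ is ample on $\mathbb{P}^{n-1}\times\mathbb{P}^{m-1}$ because $a,b\ge1$. Hence $\mathcal{O}(a,b)|_Z$ is ample. If the section $f_{r+1}|_Z$ had no zeros on $Z$, it would trivialize an ample line bundle on a projective variety of positive dimension. That is impossible: the bundle's positive powers would then have only constant sections, contradicting ampleness (equivalently, the degree of a curve in $Z$ against it would be $0$).

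For a more elementary alternative, pick an irreducible curve $C\subseteq Z$ and compare its two projections. At least one projection is nonconstant, so $C$ has positive degree against $\mathcal{O}(1,0)$ or against $\mathcal{O}(0,1)$. Since $a,b\ge1$, this gives $\deg \mathcal{O}(a,b)|_C>0$, and therefore $f_{r+1}$ vanishes somewhere on $C$. Combining nonemptiness with the Krull bound gives $\dim Y\ge k-1$, and hence $\dim Y\in\{\dim X,\dim X-1\}$.
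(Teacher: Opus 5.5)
Your proof is correct, and it is far more explicit than the paper's. The paper's entire argument is the sentence ``This follows from elementary properties of projective spaces'', presumably an appeal to the projective dimension theorem: a hypersurface meets an irreducible positive-dimensional projective variety, and every component of the intersection has dimension at least one less. That theorem does not carry over verbatim to $\mathbb{P}^{n-1}\times\mathbb{P}^{m-1}$. Your example $X=\{p\}\times\mathbb{P}^1$, $f_{r+1}=\ell(\boldsymbol{x})$ with $\ell(p)\neq 0$, giving $Y=\emptyset$, is a valid counterexample to the lemma as literally stated. So the hypothesis that $f_{r+1}$ has bidegree $(a,b)$ with $a,b\ge 1$ is necessary, not just convenient.

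It is also the hypothesis actually available where the lemma is used. In the proof of Proposition~\ref{prop/dim}, the added polynomial $\sum_i a_i\,\partial F_i/\partial x_{k+1}$ has bidegree $(1,d-1)$ with $d\ge 2$ when the $F_i$ share a degree $d$. With differing degrees it is not bihomogeneous at all, but that is a problem with the application, not with your proof.

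Your split into a local step and a global step shows exactly where positivity in both groups of variables enters, which the one-line proof hides. The local step is Krull on the charts $U_{ij}$ and works for every bidegree. The global step is nonemptiness of $Z\cap V(f_{r+1})$, via ampleness of $\mathcal{O}(a,b)|_Z$ or via $\deg\mathcal{O}(a,b)|_C>0$ on a curve $C\subseteq Z$.

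The only fix needed is editorial: delete the paragraph about the Segre embedding and ``adjusting'' $f_{r+1}$ to bidegree $(N,N)$. A power $f_{r+1}^c$ has bidegree $(ca,cb)$, which is balanced only when $a=b$. Multiplying by monomials enlarges the zero set. So that sketch does not produce a single hypersurface section, and the ampleness and curve-degree arguments that follow are complete without it.
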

\begin{proof}
 This follows from elementary properties of projective spaces.     
\end{proof}
Define the {\it singular loci} of the system $\boldsymbol{F}=(F_1,\ldots,F_R)$ as
\begin{equation*}
    V^{\ast}_{\boldsymbol{F}}=V^{\ast}_{\boldsymbol{F}}(n)=\{\boldsymbol{x}\in\mathbb{A} ^{n}:\rank(J_{\boldsymbol{F}}(\boldsymbol{x}))<R\}
\end{equation*}
in the sense of Birch. 
Then it is clear that
\begin{equation*}
    \dim V^{\ast}_{\boldsymbol{F}}(n)\le R
\end{equation*}
for a nonsingular system $\boldsymbol{F}$.
Set
$$
\codim V^{\ast}_{\boldsymbol{F}}=\codim V^{\ast}_{\boldsymbol{F}}(n)=n- \dim V^{\ast}_{\boldsymbol{F}}(n).
$$

The following proposition generalizes \cite[Theorem 5.1]{Yam19}.
\begin{proposition}\label{prop/dim}
Let $\boldsymbol{F}(\boldsymbol{x})$ be a system of $R$ forms $\in \mathbb{Z}[x_1,\ldots,x_n] $ 
whose degrees are all greater than $1$. 
Define a system of bihomogeneous forms
    \begin{equation*}
        \boldsymbol{G}(\boldsymbol{x};\boldsymbol{y})=\boldsymbol{F}(x_1y_1,\ldots,x_ny_n).
    \end{equation*}
    Then we have
    \begin{equation*}
       \min \{ \codim V_{\boldsymbol{G},1}^*, \codim V_{\boldsymbol{G},2}^*\} \geq \frac{\codim V_{\boldsymbol{F}}^*}{R+1},    
    \end{equation*}
    where 
\begin{equation*}
        V^*_{\boldsymbol{G},1}=\{(\boldsymbol{x},\boldsymbol{y}) \in \mathbb{A}^{2n}: \rank (J_{\boldsymbol{G},1})<R   \}
\end{equation*}
    with $J_{\boldsymbol{G},1}$ being the first $n$ columns of the Jacobian matrix $J_{\boldsymbol{G}}$ of $\boldsymbol{G}$, 
 and $J_{\boldsymbol{G},2}$ being the last $n$ columns of the Jacobian matrix $J_{\boldsymbol{G}}$ of $\boldsymbol{G}$. 
\end{proposition}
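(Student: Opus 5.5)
The plan is to exploit the factorisation of the partial Jacobian. By the chain rule,
\[
\frac{\partial G_i}{\partial x_k}(\boldsymbol{x};\boldsymbol{y})=y_k\,(\partial_k F_i)(\boldsymbol{x}\circ\boldsymbol{y}),\qquad \boldsymbol{x}\circ\boldsymbol{y}=(x_1y_1,\ldots,x_ny_n),
\]
so $J_{\boldsymbol{G},1}(\boldsymbol{x},\boldsymbol{y})=J_{\boldsymbol{F}}(\boldsymbol{x}\circ\boldsymbol{y})\,\mathrm{diag}(\boldsymbol{y})$. Since $\boldsymbol{G}$ is symmetric under $\boldsymbol{x}\leftrightarrow\boldsymbol{y}$, we also have $V^*_{\boldsymbol{G},2}\cong V^*_{\boldsymbol{G},1}$. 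It therefore suffices to prove
\[
\dim V^*_{\boldsymbol{G},1}\le 2n-\frac{c}{R+1},\qquad c:=\codim V^*_{\boldsymbol{F}} .
\]
I will stratify $V^*_{\boldsymbol{G},1}$ by the support $T\subseteq\{1,\ldots,n\}$ of $\boldsymbol{y}$, with $t=|T|$. There are finitely many strata, so it is enough to bound each one.

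On the stratum with $\operatorname{supp}\boldsymbol{y}=T$, the point $\boldsymbol{z}=\boldsymbol{x}\circ\boldsymbol{y}$ lies in the coordinate subspace $\mathbb{A}^T$. The condition $\rank J_{\boldsymbol{G},1}<R$ says exactly that the submatrix of $J_{\boldsymbol{F}}(\boldsymbol{z})$ formed by the columns in $T$ has rank $<R$. Let $W_T\subseteq\mathbb{A}^T$ be the set of such $\boldsymbol{z}$. On this stratum the coordinates are accounted for as follows:
\begin{itemize}
\item $\boldsymbol{y}$ contributes at most $t$ dimensions;
\item the $x_k$ with $k\notin T$ are free, contributing $n-t$;
\item the $x_k$ with $k\in T$ are determined by $\boldsymbol{z}$ and $\boldsymbol{y}$.
\end{itemize}
Hence the stratum has dimension at most $n+\dim W_T$, and the goal becomes
\[
\dim W_T\le n-\frac{c}{R+1}.
\]
This bound splits into two regimes.
\begin{itemize}
\item If $t\le n-c/(R+1)$, the trivial bound $\dim W_T\le t$ suffices.
\item Otherwise $n-t<c/(R+1)$, and I will compare $W_T$ with $V^*_{\boldsymbol{F}}$ by adding the $n-t$ missing columns back in.
\end{itemize}

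For the second regime, consider the incidence variety $Z_T\subseteq\mathbb{A}^T\times\mathbb{P}^{R-1}$ of pairs $(\boldsymbol{z},\boldsymbol{\lambda})$ with $\boldsymbol{\lambda}\cdot\partial_k\boldsymbol{F}(\boldsymbol{z})=0$ for all $k\in T$. It surjects onto $W_T$, so $\dim Z_T\ge\dim W_T$. Now impose the remaining $n-t$ equations $\boldsymbol{\lambda}\cdot\partial_k\boldsymbol{F}(\boldsymbol{z})=0$ for $k\notin T$. These are bihomogeneous in $(\boldsymbol{z},\boldsymbol{\lambda})$, since all degrees are $>1$ and so each $\partial_k F_i$ is a form. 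By Lemma~\ref{lem/biho}, each equation lowers the dimension by at most one; this is applied to the affine cone in $\boldsymbol{z}$, adjusting by one for projectivisation. The resulting variety projects into $V^*_{\boldsymbol{F}}\cap\mathbb{A}^T$, and its fibres are projectivised kernels of dimension $\le R-1$. This gives
\[
\dim W_T-(n-t)\le \dim V^*_{\boldsymbol{F}}+R-1=n-c+R-1,
\]
and therefore
\[
\dim W_T< n-c+\frac{c}{R+1}+R-1 .
\]
This is at most $n-c/(R+1)$ once $c\,\frac{R-1}{R+1}\ge R-1$, i.e. $c\ge R+1$.

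The remaining case is $c\le R+1$. There the claim only needs $\codim V^*_{\boldsymbol{G},1}\ge1$, which I would verify directly: $V^*_{\boldsymbol{G},1}$ is a proper subvariety, because at $\boldsymbol{y}=(1,\ldots,1)$ and $\boldsymbol{x}\notin V^*_{\boldsymbol{F}}$ the rank is $R$.

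The main obstacle is the fibre term $R-1$ in the second regime. It is crude, and combining it with the threshold $c/(R+1)$ is exactly where the factor $R+1$ must come from, so the bookkeeping may not close as written. If it does not, I would replace the single bound by a stratification of $V^*_{\boldsymbol{F}}$ by corank $\rho$, in the spirit of \cite[Theorem 5.1]{Yam19}. On the locus where $\rank J_{\boldsymbol{F}}\le R-\rho$ the $\boldsymbol{\lambda}$-fibre has dimension $\rho-1$, and I would bound the dimension of each such locus from the nonsingularity data. Balancing the $t$-bound against the $\rho$-bounds should produce the denominator $R+1$.
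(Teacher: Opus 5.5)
Your proof is correct, and it reaches the bound by a genuinely different route from the paper's.

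\textbf{The reduction step.} Both arguments reduce to $\dim V^*_{\boldsymbol G,1}\le n+\max_T\dim W_T$, where your $W_T$ is the paper's $T_k$. You get this from the isomorphism of the stratum $\{\operatorname{supp}\boldsymbol y=T\}$ of $V^*_{\boldsymbol G,1}$ with $W_T\times\mathbb{G}_m^{t}\times\mathbb{A}^{n-t}$. That is cleaner than the paper's Claim~1, which chooses a top-dimensional component and slices it by generic $y_j=z_j$.

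\textbf{Bounding $\dim W_T$.} This is where the routes really differ. The paper proves the one-step recursion $\dim T_k\le\dim T_{k+1}+R$. At every level it passes from the incidence variety back to its image, so it pays the fibre dimension $R-1$ at every step. Iterating gives $\dim T_k\le\min\{k,\dim V^*_{\boldsymbol F}+R(n-k)\}$, whose maximum over $k$ is at most $(Rn+\dim V^*_{\boldsymbol F})/(R+1)$, uniformly in $c$. You instead stay inside one incidence variety while imposing all $n-t$ missing equations, so you pay $R-1$ only once: $\dim W_T\le\dim V^*_{\boldsymbol F}+(n-t)+R-1$.

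Balancing this against $\dim W_T\le t$ gives $\codim V^*_{\boldsymbol G,1}\ge (c-R+1)/2$. This dominates $c/(R+1)$ as soon as $c\ge R+1$, and for large $c$ it is actually stronger than the paper's bound. The price is that small $c$ must be handled separately. Your properness argument does this correctly, and $c=0$ is trivial. So your bookkeeping does close as written, and the fallback via a corank stratification is not needed.

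\textbf{One justification to repair.} The paper has the same flaw. When the $F_i$ have different degrees, which is the case this paper cares about, $\sum_i\lambda_i\,\partial_kF_i(\boldsymbol z)$ is not homogeneous in $\boldsymbol z$. So Lemma~\ref{lem/biho} does not apply literally, and in general a hypersurface can miss a component altogether. The fix is as follows.
\begin{enumerate}
\item $Z_T$ and each $H_k=\{\sum_i\lambda_i\partial_kF_i(\boldsymbol z)=0\}$ are invariant under the $\mathbb{G}_m$-action $(\boldsymbol z,\boldsymbol\lambda)\mapsto(s\boldsymbol z,(s^{1-d_i}\lambda_i)_i)$ on $\mathbb{A}^T\times\mathbb{P}^{R-1}$.
\item Every irreducible component of an invariant closed set is itself invariant. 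Letting $s\to0$, it therefore contains a point with $\boldsymbol z=\boldsymbol 0$.
\item Such points lie on every $H_k$, because $\deg F_i\ge2$.
\end{enumerate}
Hence each successive intersection is nonempty, and Krull's principal ideal theorem gives a loss of at most one dimension per equation.
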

\begin{proof}
    In fact, most of the argument can be directly copied from that of \cite[Theorem 5.1]{Yam19}, except for the part that has essential differences. 
    However, for the sake of completeness, we will rewrite it with apppropriate omissions.
    
    Let $X$ be an irreducible component of $V_{\boldsymbol{G},1}^*$ with $\dim X= \dim V_{\boldsymbol{G},1}^*$.
    Up to reordering of variables we may assume that
    $$
    X \nsubseteq V(y_j) \ (1\leq j\leq m) \ \text{and} \ X\subseteq V(y_i) \ (m+1\leq j \leq n)
    $$
    for some $0\leq m \leq n$.

    Claim 1: There exists $(z_1,\ldots,z_m) \in (\mathbb{C}\setminus \{0\} )^m$ such that
    $$
    \dim X \cap (\cap_{1\leq j \leq m} V(y_j-z_j)) \geq \dim X-m.
    $$
    The proof of Claim 1 is just the same as that in \cite[Theorem 5.1]{Yam19}.
    Let $z_{m+1}=\ldots=z_n=0$.
    Then we have 
    \begin{equation}\label{fir/step}
        \begin{split}
            \dim X \cap (\cap_{1\leq j \leq n} V(y_j-z_j))    
            &=\dim X \cap (\cap_{1\leq j \leq m} V(y_j-z_j)) \\
            &\geq \dim X-m \\
            &=\dim V_{\boldsymbol{G},1}^* -m.   
        \end{split}
    \end{equation}
    We also have
    \begin{equation}\label{transition}
        X \cap (\cap_{1\leq j \leq n} V(y_j-z_j)) \subseteq V_{\boldsymbol{G},1}^* \cap (\cap_{1\leq j \leq n} V(y_j-z_j)). 
    \end{equation}
    For each $1\leq k \leq n$, we define
    \begin{equation}\label{def/Mk}
       M_k=
       \begin{pmatrix}
       \frac{\partial{F_1}}{\partial{x_1}}(\boldsymbol{x}) & \ldots & \frac{\partial{F_1}}{\partial{x_k}}(\boldsymbol{x}) \\
       \ldots & \ldots & \ldots \\
       \frac{\partial{F_R}}{\partial{x_1}}(\boldsymbol{x}) & \ldots & \frac{\partial{F_R}}{\partial{x_k}}(\boldsymbol{x})
       \end{pmatrix}
       =
       \begin{pmatrix}
        \boldsymbol{M_{k,1}}  \\
        \ldots  \\
        \boldsymbol{M_{k,R}}
       \end{pmatrix}
    ,
    \end{equation}
    \begin{equation}\label{def/Tk}
        T_k=\{ \boldsymbol{x}\in \mathbb{A}^n: \rank\, M_k <R, x_{k+1}=\ldots=x_n=0 \}
    \end{equation}
    and
    \begin{equation}\label{def/Uk}
        U_k=\{ \boldsymbol{x}\in \mathbb{A}^n: \rank\, M_k <R, x_{k+2}=\ldots=x_n=0 \}.
    \end{equation}
    Here $T_k$ and $U_k$ are affine varieties.
    Then it is clear that $T_n=V_{\boldsymbol{F}}^*$ and $\dim T_{k+1} \leq \dim U_k=\dim T_k$ or $\dim T_k +1$ as affine varieties.
By \eqref{transition}, \eqref{def/Mk} and \eqref{def/Tk} we obtain
    \begin{equation}\label{sec/step}
        \dim (X \cap (\cap_{1\leq j \leq n} V(y_j-z_j))) \leq n-m+ \dim T_m.
    \end{equation}
    
    Claim 2: We have 
    \begin{equation}\label{Tk/upp/bound}
        \max_{1\leq k \leq n} \{\dim T_k \} \leq \frac{Rn+\dim V_{\boldsymbol{F}}^*}{R+1}.
    \end{equation}
    It is worth mentioning that there are significant differences between our proof for Claim 2 and that in \cite[Theorem 5.1]{Yam19}, which stems from the distinction between a single form and a system of forms.  
    And we need more delicate discussions.

    The crucial part is to give a nice upper bound for $\dim U_k - \dim T_{k+1} $.
    Put, by \eqref{def/Mk}, 
    \begin{equation}\label{def/Xkk+2}
        X_{k,k+2}=\{(a_1,\ldots,a_R, \boldsymbol{x}) \in \tilde{\mathbb{A}}:a_1\boldsymbol{M_{k,1}}+\ldots+ a_R\boldsymbol{M_{k,R}}=\boldsymbol{0}, x_{k+2}=\ldots=x_n=0  \}
    \end{equation}
    and
    \begin{equation}\label{def/Xk+1k+2}
        X_{k+1,k+2}=\{(a_1,\ldots,a_R, \boldsymbol{x}) \in \tilde{\mathbb{A}}:a_1\boldsymbol{M_{k+1,1}}+\ldots+ a_R\boldsymbol{M_{k+1,R}}=\boldsymbol{0}, x_{k+2}=\ldots=x_n=0  \},
    \end{equation}
    where $\tilde{\mathbb{A}}:=(\mathbb{A}^R \setminus \{\boldsymbol{0}\}) \times (\mathbb{A}^n \setminus \{\boldsymbol{0}\})$.
    Consider the canonical maps $\tilde{\mathbb{A}}\to \mathbb{P}^{R-1}\times \mathbb{P}^{k}$ and ${A}^{k+1} \setminus \{\boldsymbol{0}\} \to \mathbb{P}^k $, and denote by $\tilde{X}_{k,k+2}$, $\tilde{X}_{k+1,k+2}$, $\tilde{U}_k$ and $\tilde{T}_{k+1}$ the images of $X_{k,k+2}$, $X_{k+1,k+2}$, $U_k$, $T_{k+1}$, respectively.
    Then the projection map $\pi: \mathbb{P}^{R-1}\times \mathbb{P}^{k} \to \mathbb{P}^{k}$ induces two surjective and regular maps $\pi_{k,k+2}: \tilde{X}_{k,k+2}\to \tilde{U}_k $, by \eqref{def/Xkk+2} and \eqref{def/Uk}, and $\pi_{k+1,k+2}: \tilde{X}_{k+1,k+2}\to \tilde{T}_{k+1}$, by \eqref{def/Xk+1k+2} and \eqref{def/Tk}.
    
    Choosing an irreducible component $\tilde{X}^0_{k,k+2}$ of $\tilde{X}_{k,k+2}$ with $\dim \tilde{X}^0_{k,k+2}=\dim \tilde{X}_{k,k+2}$, we have $\pi_{k,k+2}(\tilde{X}^0_{k,k+2})$ is irreducible.  
    By \cite[Corollary 11.13]{Har} we get 
    \begin{equation*}
        \begin{split}
        \dim \tilde{X}^0_{k,k+2} - \min_{p \in \pi_{k,k+2}(\tilde{X}^0_{k,k+2}) } \{ \dim \pi_{k,k+2}^{-1}(p)  \}
        &=\dim \pi_{k,k+2}(\tilde{X}^0_{k,k+2}) \\
        &\le \dim \tilde{U}_k.
        \end{split}
    \end{equation*}
    Since we have the trivial bound $\dim \pi_{k,k+2}^{-1}(p)\leq R-1$ for any $p$, it follows that
    \begin{equation*}
        \dim \tilde{X}_{k,k+2} - (R-1) \leq \dim \tilde{U}_k \leq \dim \tilde{X}_{k,k+2},
    \end{equation*}
    by the surjectivity of $\pi_{k,k+2}$.
    Similarly, we can get
    \begin{equation*}
        \dim \tilde{X}_{k+1,k+2} - (R-1) \leq \dim \tilde{T}_{k+1} \leq \dim \tilde{X}_{k+1,k+2}.
    \end{equation*}
    Then it follows from Lemma \ref{lem/biho} that $\dim \tilde{X}^0_{k+1,k+2}=\dim \tilde{X}^0_{k,k+2}$ or $\dim \tilde{X}^0_{k,k+2}-1$.
    Therefore we deduce from the above that
    \begin{equation*}
        \dim U_k - \dim T_{k+1}
        =\dim \tilde{U}_k-\dim \tilde{T}_{k+1} \leq R.
    \end{equation*} 
    Recall that $\dim T_{k+1} \leq \dim U_k=\dim T_k$ or $\dim T_k +1$.
    Thus we get, for each $1\leq k \leq n-1$,
    \begin{equation}\label{Tk/rel}
        \dim T_{k+1} -1 \leq \dim T_k \leq \dim T_{k+1} +R.
    \end{equation}
    Since $\dim T_n=\dim V_{\boldsymbol{F}}^*$ and $0\leq \dim T_k \leq k$, by \eqref{Tk/rel},
    it is easy to show \eqref{Tk/upp/bound} holds.
    Finally, by \eqref{fir/step}, \eqref{sec/step} and \eqref{Tk/upp/bound}, we obtain
    \begin{equation*}
        \codim V_{\boldsymbol{G},1}^*=2n-\dim V_{\boldsymbol{G},1}^* \geq 2n-n-\frac{Rn+\dim V_{\boldsymbol{F}}^*}{R+1}=\frac{\codim V^*_{\boldsymbol{F}}}{R+1}.
    \end{equation*}

Finally it follows by symmetry that the same bound holds for $\codim V_{\boldsymbol{G},2}^*$.  
This completes the proof.
\end{proof}

The next lemma is just \cite[Lemma 7.1]{LiuXie}.

\begin{lemma}\label{Lem/7/1}
    Let $s_d(n, \boldsymbol{F})$ be as in \eqref{def sd} for all $d$. Then 
    \begin{align*}\label{s_d upper bound}
        s_1(n, \boldsymbol{F})\le A_1(n, \boldsymbol{F}),
    \end{align*}
    where
    \begin{equation*}\label{def A1n}
        A_1(n, \boldsymbol{F}):=\frac{2^{D-1}(D-1)R}{n-\dim V^{\ast}_{\boldsymbol{F}}(n)}
    \end{equation*}  
    and $D$ is the highest degree in $\boldsymbol{F}$. 
\end{lemma}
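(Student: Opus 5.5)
The plan is to compare each summand of $s_1(n,\boldsymbol{F})$ with the corresponding piece of $A_1(n,\boldsymbol{F})$. By \eqref{def sd},
\[
s_1(n,\boldsymbol{F})=\sum_{i\in\Delta}\frac{2^{i-1}(i-1)r_i}{n-B_i(n,\boldsymbol{F})},
\]
since degrees with $r_i=0$ contribute nothing. Because $\sum_{i\in\Delta} r_i=R$, and because $2^{i-1}(i-1)$ is increasing in $i$ and so bounded by $2^{D-1}(D-1)$, it suffices to prove the single geometric inequality
\[
B_i(n,\boldsymbol{F})\le \dim V^{\ast}_{\boldsymbol{F}}(n)\qquad (i\in\Delta).
\]
Summing over $i$ then gives
\[
s_1\le \frac{2^{D-1}(D-1)}{n-\dim V^*_{\boldsymbol{F}}}\sum_{i\in\Delta} r_i=A_1(n,\boldsymbol{F}).
\]
When $\dim V^*_{\boldsymbol{F}}=n$ the statement is vacuous or infinite, so I will assume $n-\dim V^*_{\boldsymbol{F}}>0$.

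For the geometric inequality, the key observation is that $J_{\boldsymbol{F},i}(\boldsymbol{x})$ is the submatrix of the full Jacobian $J_{\boldsymbol{F}}(\boldsymbol{x})$ consisting of the $r_i$ rows belonging to forms of degree $i$. If $\boldsymbol{x}\in S_i(n,\boldsymbol{F})$, those $r_i$ rows are linearly dependent, so the $R$ rows of $J_{\boldsymbol{F}}(\boldsymbol{x})$ are dependent as well, and hence $\rank J_{\boldsymbol{F}}(\boldsymbol{x})<R$. Thus $S_i(n,\boldsymbol{F})\subseteq V^*_{\boldsymbol{F}}(n)$, and dimension is monotone under inclusion of affine varieties. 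This gives $B_i\le \dim V^*_{\boldsymbol{F}}$, and therefore $n-B_i\ge n-\dim V^*_{\boldsymbol{F}}>0$.

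The only step needing care, rather than a genuine obstacle, is the bookkeeping. I must check that the convention $B_d=0$ when $r_d=0$ causes no trouble; it does not, because those terms vanish. I must also note that the $\mathbb{C}$-dimension in the sense of Birch is used consistently for both $S_i$ and $V^*_{\boldsymbol{F}}$.
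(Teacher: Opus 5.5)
Your proof is correct and complete: a linear dependency among the $r_i$ degree-$i$ rows of the Jacobian is also a dependency among all $R$ rows, so $S_i(n,\boldsymbol{F})\subseteq V^{\ast}_{\boldsymbol{F}}(n)$, and combining this with $2^{i-1}(i-1)\le 2^{D-1}(D-1)$ and $\sum_{i\in\Delta} r_i=R$ gives the bound term by term. The paper gives no argument of its own here and simply quotes \cite[Lemma 7.1]{LiuXie}, so your reasoning serves as a self-contained proof of that cited fact.
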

We now propose the following lemma for comparing the singularities of systems $\boldsymbol{F}$ and $\boldsymbol{L}$, defined in Theorem \ref{prop gen Gau}.
\begin{lemma}\label{dim/rel}
Let $\boldsymbol{F}$ and $\boldsymbol{L}$ be as in Theorem \ref{prop gen Gau}.
Then
\begin{equation*}
\codim V^*_{\boldsymbol{L}} \geq \frac{\codim V^*_{\boldsymbol{F}}}{R+1},
\end{equation*}
where $R$ is the number of equations.
\end{lemma}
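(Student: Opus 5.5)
We reduce the statement to Proposition \ref{prop/dim} by analysing the Jacobian of $\boldsymbol{L}$. The system $\boldsymbol{L}$ lives in the $4n$ variables $(\boldsymbol{h},\boldsymbol{h}',\boldsymbol{j},\boldsymbol{j}')$, and its $R\times 4n$ Jacobian splits into four blocks of $n$ columns each. Differentiating
\[
\boldsymbol{L}=\boldsymbol{G}(\boldsymbol{h};\boldsymbol{j})-\boldsymbol{G}(\boldsymbol{h};\boldsymbol{j}')-\boldsymbol{G}(\boldsymbol{h}';\boldsymbol{j})+\boldsymbol{G}(\boldsymbol{h}';\boldsymbol{j}')
\]
in $\boldsymbol{h}$ gives the block $J_{\boldsymbol{G},1}(\boldsymbol{h},\boldsymbol{j})-J_{\boldsymbol{G},1}(\boldsymbol{h},\boldsymbol{j}')$. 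This difference is awkward to control directly.

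The plan is to restrict attention to a convenient linear slice of $\mathbb{A}^{4n}$. Fix the hyperplanes $\boldsymbol{j}'=\boldsymbol{0}$ and $\boldsymbol{h}'=\boldsymbol{0}$. Since every form in $\boldsymbol{F}$ has degree at least $2$, the map $\boldsymbol{G}$ is bihomogeneous of positive degree in each set of variables. Hence $\boldsymbol{G}(\boldsymbol{h};\boldsymbol{0})$, $\boldsymbol{G}(\boldsymbol{h}';\cdot)$ and their first derivatives all vanish on this slice. The $\boldsymbol{h}$-block of the Jacobian of $\boldsymbol{L}$ therefore reduces to $J_{\boldsymbol{G},1}(\boldsymbol{h},\boldsymbol{j})$.

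The key containment is then
\[
V^*_{\boldsymbol{L}}\cap\{\boldsymbol{h}'=\boldsymbol{j}'=\boldsymbol{0}\}\subseteq V^*_{\boldsymbol{G},1}\times\{\boldsymbol{0}\}.
\]
It holds because a rank drop of the full Jacobian forces a rank drop of every column sub-block.

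Slicing alone, however, only bounds the dimension of a section, not of $V^*_{\boldsymbol{L}}$ itself. Affine dimension theory gives $\dim(V^*_{\boldsymbol{L}}\cap H)\ge \dim V^*_{\boldsymbol{L}}-2n$ for a linear space $H$ of codimension $2n$. Combining this with the containment yields $\dim V^*_{\boldsymbol{L}}\le 2n+\dim V^*_{\boldsymbol{G},1}$. Taking codimensions in $\mathbb{A}^{4n}$ gives
\[
\codim V^*_{\boldsymbol{L}}\ge \codim V^*_{\boldsymbol{G},1}\ \text{in}\ \mathbb{A}^{2n}.
\]
Proposition \ref{prop/dim} bounds the right-hand side below by $\codim V^*_{\boldsymbol{F}}/(R+1)$. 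The slice passes through the origin, and $V^*_{\boldsymbol{L}}$ is a cone containing $\boldsymbol{0}$, so every component meets the slice. The inequality $\dim(Z\cap H)\ge\dim Z-\codim H$ thus applies to a top-dimensional component $Z$ of $V^*_{\boldsymbol{L}}$.

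The main obstacle is justifying this containment carefully, which requires checking that the Jacobian of $\boldsymbol{L}$ restricted to the slice really contains $J_{\boldsymbol{G},1}(\boldsymbol{h},\boldsymbol{j})$ as a column block. The derivative of $\boldsymbol{G}(\boldsymbol{h};\boldsymbol{j}')$ with respect to $\boldsymbol{h}$ at $\boldsymbol{j}'=\boldsymbol{0}$ vanishes, since each monomial of $F_i(h_1j_1',\dots)$ has total degree $\deg F_i\ge 2$ in $\boldsymbol{j}'$. The terms involving $\boldsymbol{h}'$ are independent of $\boldsymbol{h}$. So the block is indeed exactly $J_{\boldsymbol{G},1}(\boldsymbol{h},\boldsymbol{j})$. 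We could instead use the $\boldsymbol{j}$-block and $V^*_{\boldsymbol{G},2}$; either choice works, since Proposition \ref{prop/dim} bounds both. Finally, the rank condition defining $V^*_{\boldsymbol{L}}$ uses all $R$ forms together, and $\boldsymbol{L}$ has the same number $R$ of forms as $\boldsymbol{F}$, so the same $R$ appears in the bound.
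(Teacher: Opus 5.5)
Your proof is correct and has the same skeleton as the paper's. Both control $\dim V^*_{\boldsymbol{L}}$ through a rank drop in a column sub-block of $J_{\boldsymbol{L}}$, compare with $V^*_{\boldsymbol{G},1}$, and conclude with Proposition~\ref{prop/dim}.

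The difference lies in the comparison step. The paper passes to $V^*_{\boldsymbol{L},1}$, the rank-drop locus of the $(\boldsymbol{h},\boldsymbol{h}')$-columns. Those columns still carry the awkward differences such as $J_{\boldsymbol{G},1}(\boldsymbol{h},\boldsymbol{j})-J_{\boldsymbol{G},1}(\boldsymbol{h},\boldsymbol{j}')$. The paper then gets $\codim V^*_{\boldsymbol{L},1}\ge\codim V^*_{\boldsymbol{G},1}$ only by pointing to ``the argument leading to \cite[(14)]{Yam19}'', without reproducing it.

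You instead remove the differences by restricting to a linear slice through the origin, on which the $\boldsymbol{h}$-block is exactly $J_{\boldsymbol{G},1}(\boldsymbol{h},\boldsymbol{j})$. You then recover the full dimension by applying the affine dimension theorem to a top-dimensional component. That component meets the slice because $V^*_{\boldsymbol{L}}$ is cut out by homogeneous minors, and every irreducible component of a cone is again a cone. This makes the step self-contained and loses nothing in the final bound.

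One small simplification: the condition $\boldsymbol{h}'=\boldsymbol{0}$ is not needed. On $\{\boldsymbol{j}'=\boldsymbol{0}\}$ alone the $\boldsymbol{h}$-block is already $J_{\boldsymbol{G},1}(\boldsymbol{h},\boldsymbol{j})$. Slicing in codimension $n$ then gives the same inequality $\dim V^*_{\boldsymbol{L}}\le 2n+\dim V^*_{\boldsymbol{G},1}$.
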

\begin{proof}
    Define
    \begin{equation*}
     V^*_{\boldsymbol{L},1}=\{(\boldsymbol{h},\boldsymbol{h}',\boldsymbol{j},\boldsymbol{j}') \in \mathbb{A}^{4n}:  
     \rank (J_{\boldsymbol{L},1})<R   \}
    \end{equation*}
    where $J_{\boldsymbol{L},1}$ consists of the first $2n$ columns of $J_{\boldsymbol{L}}$.
    Then $\dim V^*_{\boldsymbol{L}}\leq \dim V^*_{\boldsymbol{L},1}$, and therefore 
    \begin{align*} 
     \codim V^*_{\boldsymbol{L}}=4n-\dim V^*_{\boldsymbol{L}}\geq 4n-\dim V^*_{\boldsymbol{L},1}
     =\codim V^*_{\boldsymbol{L},1}.  
    \end{align*} 

    For $\boldsymbol{G}$ as in Theorem \ref{prop gen Gau}, we can define similarly $J_{\boldsymbol{G}, 1}$ and 
    $V^*_{\boldsymbol{G}, 1}$. Then, by the argument leading to \cite[(14)]{Yam19} and Proposition \ref{prop/dim}, 
    \begin{align*} 
     \codim V^*_{\boldsymbol{L},1} \geq \codim V^*_{\boldsymbol{G},1} \geq \frac{\codim V^*_{\boldsymbol{F}}}{R+1}. 
    \end{align*} 
 This completes the proof.
\end{proof}

\begin{proof}[Proof of Corollary \ref{thm2}]
Now we immediately deduce Corollary \ref{thm2} from Theorem \ref{prop gen Gau}, 
Lemmas \ref{Lem/7/1} and \ref{dim/rel}, and $\dim V^{\ast}_{\boldsymbol{F}}(n)\le R$, 
provided that $\boldsymbol{F}$ is nonsingular.
\end{proof}

\section{Application to the Birch--Goldbach problem}  

For a system $\boldsymbol{F} = (F_1, \ldots, F_R)$ of forms $F_i \in \mathbb{Z}[x_1, \ldots, x_n]$ with differing degrees, 
the Birch–Goldbach problem concerns the solubility of \eqref{bfF=bf0} in primes. 
Let $\mathfrak{B}$ be a fixed box in $n$-dimensional space defined by
\begin{align*}
b_i'< x_i\le b_{i}'',
\end{align*}
where $0<b_i'<b_{i}''<1$ are fixed constants for $i=1, \ldots, n$.  
We establish an asymptotic formula for the counting function  
\begin{align*}
N_{\boldsymbol{F}}(X) 
= \sum_{\substack{\boldsymbol{x} \in X\mathfrak{B} \\ \boldsymbol{F}(\boldsymbol{x}) = \mathbf{0}}} 
\Lambda(\boldsymbol{x}), 
\end{align*}
where $\Lambda(\boldsymbol{x}) = \Lambda(x_1)\cdots \Lambda(x_n)$ and $\Lambda(\cdot)$ is the von Mangoldt function. 
This yields a local-global principle for \eqref{bfF=bf0} in primes.  

\begin{theorem}\label{thm:main}
Let $F_1, \ldots, F_R \in \mathbb{Z}[x_1, \ldots, x_n]$ be forms with differing degrees, $D$ the highest degree, 
and $\mathcal{D}$ the sum of all degrees. If $\boldsymbol{F} = (F_1, \ldots, F_R)$ is nonsingular and 
\begin{equation*}\label{s/con/new}
n \geq D^2 4^{D+2} R^5, 
\end{equation*}
then
\[
N_{\boldsymbol{F}}(X) \sim \mathfrak{S}_{\boldsymbol{F}} \mathfrak{I}_{\boldsymbol{F}} X^{n - \mathcal{D}},
\]
where $\mathfrak{S}_{\boldsymbol{F}}$ and $\mathfrak{I}_{\boldsymbol{F}}$ are the singular series and singular integral 
associated with \eqref{bfF=bf0} in primes, both absolutely convergent. 
\end{theorem}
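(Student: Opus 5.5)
We prove Theorem \ref{thm:main} with the Hardy--Littlewood circle method, and we follow the saving-transfer framework of \cite{LiuXieSavTra}. Corollary \ref{thm2} is used as the new input on the major arcs. The counting function is
\[
N_{\boldsymbol{F}}(X)=\int_{[0,1)^R} S(\boldsymbol{\alpha})\,d\boldsymbol{\alpha}, \qquad
S(\boldsymbol{\alpha})=\sum_{\boldsymbol{x}\in X\mathfrak{B}}\Lambda(\boldsymbol{x})\,e(\boldsymbol{\alpha}\cdot\boldsymbol{F}(\boldsymbol{x})).
\]
We split $[0,1)^R$ into enlarged major arcs $\mathfrak{M}(\theta)$ and minor arcs $\mathfrak{m}$. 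On $\mathfrak{M}(\theta)$ we have $q\le X^{\theta}$ and $|\alpha_{i}-a_{i}/q|\le X^{\theta-\deg F_i}$, with $\theta$ a small fixed power. The treatment of the two ranges is as follows.

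\textbf{Minor arcs.} For $\boldsymbol{\alpha}\in\mathfrak{m}$ we need a Weyl-type bound $S(\boldsymbol{\alpha})\ll X^{n-\delta\theta}$ coming from a Birch-type inequality for prime variables. We import this from \cite{LiuXie}, which uses Vaughan's identity and Weyl differencing in the form of the ``$s_d$'' invariants. Lemma \ref{Lem/7/1} bounds these invariants by $A_1(n,\boldsymbol{F})\le 2^{D-1}(D-1)R/(n-R)$. Integrating over $\mathfrak{m}$ costs a measure factor of at most $1$. It therefore suffices that the saving $\delta\theta$ exceeds the loss $X^{\mathcal{D}}$ relative to $X^{n-\mathcal{D}}$. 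This forces $n-R\gg 2^{D}DR\cdot(\mathcal{D}/\theta)$.

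\textbf{Major arcs.} On $\mathfrak{M}(\theta)$ we expand $\Lambda$ over residue classes and use Dirichlet characters modulo $q$. For characters that are not principal we use zero-density estimates (a Siegel--Walfisz type argument). This yields the approximation
\[
S(\boldsymbol{\alpha})\approx \sum_{\boldsymbol{\chi}}\frac{C_{\boldsymbol{F}}(q,\boldsymbol{a};\bar{\boldsymbol{\chi}})}{\varphi(q)^n}\,W(\boldsymbol{\chi},\boldsymbol{\beta}),
\]
where the $W$ are weighted integrals. Because $\theta$ is enlarged, the trivial bound on the Gauss sums is not enough. Here we apply Corollary \ref{thm2}, which gives a saving of $q^{-(n-R)/(2^{2D+1}(2D-1)(R+1)R)}$ for every $\boldsymbol{a}$ coprime to $q$. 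The min over $j\in\Delta$ only helps: taking $j$ minimal, we have $\gcd(q,\boldsymbol{a})=1$. This is the ``saving from the finite place''. Saving transfer converts it into decay in $|\boldsymbol{\beta}|$, which makes both the truncated singular series and the truncated singular integral converge absolutely with a tail that is $o(1)$. The main term $\mathfrak{S}_{\boldsymbol{F}}\mathfrak{I}_{\boldsymbol{F}}X^{n-\mathcal{D}}$ then follows in the standard way.

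\textbf{Main obstacle: bookkeeping of exponents.} The hard part is choosing $\theta$ so that three requirements hold at once:
\begin{itemize}
\item the minor arc saving beats the loss $X^{\mathcal{D}}$, which needs roughly $(n-R)\theta/(2^{D}DR)>\mathcal{D}$;
\item the Gauss-sum exponent $(n-R)/(2^{2D+1}(2D-1)(R+1)R)$ exceeds the count $R+1$ of free $\boldsymbol{a}$-variables, which is needed for convergence of $\sum_q q^{R}\max_{\boldsymbol{a}}|C_{\boldsymbol{F}}|/q^{n}$;
\item the prime-number-theorem error on the major arcs remains acceptable for $q\le X^\theta$.
\end{itemize}
Since $\mathcal{D}\le DR$, the second condition needs $n\gtrsim 2^{2D+2}D R^{3}$. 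The first, with $\theta$ about $1/(DR)$ or smaller for the PNT step, needs $n\gtrsim 2^{D}D^{2}R^{3}\cdot R$. Combining these with the losses in the saving-transfer lemma and checking that $n\ge D^{2}4^{D+2}R^{5}$ dominates all of them is where I expect the delicate work. I would first fix $\theta=1/(16DR^{2})$ and verify each inequality under this hypothesis.
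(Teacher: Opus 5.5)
\textbf{Minor arcs: a concrete gap.} It comes from your choice of $\theta$. The paper takes the major arcs as large as saving transfer allows: $q\le Q=X^{1/(4(R+1))}$ and $|\alpha_i-a_i/q|\le Q/(qX^{\deg F_i})$. It then disposes of the minor arcs by quoting \cite[Lemma 7.5]{LiuXie}. The hypothesis $n\ge D^24^{D+2}R^5$ is exactly the condition under which that lemma is invoked, and it is the binding constraint of the theorem; the Gauss-sum side needs only $n-R>2^{2D+1}(2D-1)(R+1)^2R$.

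You instead shrink the arcs to $\theta=1/(16DR^2)$, because you expect the PNT step to force $\theta\lesssim 1/(DR)$. It does not: handling characters of conductor up to $X^{1/(4(R+1))}$ is exactly what saving transfer does once Lemma \ref{lem/bound/gauss} is available.

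Shrinking the arcs only enlarges $\mathfrak m$, and this hurts you in two ways.
\begin{enumerate}
\item Your sup-norm treatment of $\mathfrak m$ needs the Weyl saving exponent to exceed $\mathcal D/\theta$. So the requirement on $n$ scales like $1/\theta$, and your $\theta$ costs a factor $4DR^2/(R+1)\ge 2DR$ compared with running the same argument at the paper's $Q$.
\item Your heuristic exponent $(n-R)/(2^DDR)$ is Birch's integer-variable one. For $\Lambda$-weighted sums the Weyl inequality goes through Vaughan's identity and Cauchy--Schwarz and is considerably weaker; the $4^{D}$ rather than $2^D$ in the hypothesis reflects this.
\end{enumerate}
The hypothesis is already exactly what the minor arcs need at the larger $Q$, so there is no slack to absorb a loss of order $DR$. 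The bookkeeping you defer therefore cannot close as you have set it up. The repair is to keep the paper's dissection and cite \cite[Lemma 7.5]{LiuXie}, rather than re-derive the minor arcs with a smaller $\theta$.

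\textbf{Major arcs: correct, and simpler than the paper's route.} Corollary \ref{thm2} at the least degree $j_0$ gives
\[
C_{\boldsymbol F}(q,\boldsymbol a;\boldsymbol\chi)\ll q^{n-\kappa+\varepsilon},\qquad \kappa=\frac{n-R}{2^{2D+1}(2D-1)(R+1)R},
\]
uniformly in $\boldsymbol a$ coprime to $q$. Summing trivially over the $q^R$ choices of $\boldsymbol a$ gives
\[
\sum_{\substack{q\le Q\\ k_0\mid q}}\frac{|\nu(q;\chi_1,\ldots,\chi_n)|}{\varphi^n(q)}\ll\sum_{\substack{q\le Q\\ k_0\mid q}}q^{R-\kappa+2\varepsilon}\ll k_0^{-(\kappa-R-2\varepsilon)}.
\]
This is Lemma \ref{lem/bound/gauss} with $\delta=\kappa-R-2\varepsilon>1$ as soon as $\kappa>R+1$. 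That is the same numerical condition the paper reaches. Its Browning--Heath-Brown weighting over the $d_j$ is in the end bounded using $s_{2i}(4n,\boldsymbol L)\le s_1(4n,\boldsymbol L)$, which reduces to $(R+1)s_1(4n,\boldsymbol L)<1/4$.

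You should state the requirement in this conductor-decay form: decay in $k_0$ with exponent $>1$ for the twisted sums $\nu(q;\boldsymbol\chi)$ with $k_0\mid q$. That is what the saving-transfer argument actually uses. Convergence of $\sum_q q^R\max_{\boldsymbol a}|C_{\boldsymbol F}|/q^n$ alone does not capture it.
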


This improves upon \cite[Theorem~1.2]{LiuXie}, which required $n \geq D^2 4^{D+6} R^5$. 

The proof follows \cite{LiuXie}, so we only highlight the differences. The circle method begins with
\[
N_{\boldsymbol{F}}(X) = \int_{(0,1]^R} S_{\boldsymbol{F}}(\boldsymbol{\alpha})  d\boldsymbol{\alpha} 
\]
where 
\begin{equation*}\label{def/SFa}
S_{\boldsymbol{F}}(\boldsymbol{\alpha}) 
= \sum_{\boldsymbol{x} \in X\mathfrak{B}} \Lambda(\boldsymbol{x})
e\bigg(\sum_{i=1}^{R} \alpha_{i} F_{i}(\boldsymbol{x}) \bigg). 
\end{equation*}
The cube $(0,1]^R$ is partitioned into major arcs $\mathfrak{M}$ and minor arcs $\mathfrak{m}$ as in \cite{LiuXie}.  
Let 
\begin{align}\label{Def/vpi/Q} 
Q=X^{\frac{1}{4(R+1)}}.  
\end{align}
The major arcs are defined as 
\begin{equation*}\label{define MQ}
\mathfrak{M}=\mathfrak{M}(Q)=\bigcup_{1\le q\le Q}\bigcup_{\substack{1\le a_1,\ldots,a_R\le q
     \\ (a_1,\ldots,a_R,q)=1}}\mathfrak{M}(q,\boldsymbol{a};Q),
\end{equation*}
where
\begin{align*}
    \mathfrak{M}(q,\boldsymbol{a};Q)
=\bigg\{\boldsymbol{\alpha} 
\in \R^R:\ \bigg|\alpha_{i}-\frac{a_{i}}{q}\bigg|\leq \frac{Q}{qX^{\deg F_i}} \bigg\}. 
\end{align*}
The minor arcs are the complement of $\mathfrak{M}$ in $(0,1]^{R}$.

Under \eqref{Def/vpi/Q} and 
\begin{align}\label{Lem81/n>}
n\geq D^24^{D+2}R^5,  
\end{align}
we have, by \cite[Lemma 7.5]{LiuXie}, that 
\begin{align}\label{min/est}
\int_{\mathfrak{m} } S_{\boldsymbol{F}}(\boldsymbol{\alpha})  d\boldsymbol{\alpha} = o(X^{n-\mathcal{D}}). 
\end{align}
Note that $Q$ must be a positive power of $X$ as in \eqref{Def/vpi/Q}; the classical choice $Q=\log^B X$ is insufficient.

With $Q$ as in \eqref{Def/vpi/Q}, the major arcs are rather large, and we apply the saving-transfer method as 
summarized in \cite{LiuXie} to overcome the difficulties caused by the inapplicability of the Siegle--Walfisz theorem. 
The core of the method transfers savings from finite places to the infinite place, which is essential for 
handling systems with prime variables and differing degrees. For an exposition of the saving-transfer method, 
the reader is referred to \cite{LiuXieSavTra}.

\begin{lemma}\label{lem/bound/gauss}
Let $F_1, \ldots, F_R \in \mathbb{Z}[x_1, \ldots, x_n]$ be forms with differing degrees, 
and $D$ be the highest degree, and suppose that $\boldsymbol{F} = (F_1, \ldots, F_R)$ is nonsingular.  
Let \( \chi_1, \dots, \chi_n \) be Dirichlet characters modulo \( k_1, \dots, k_n \) respectively,  
where each \( k_i \) divides \( q \).  
Let \( k_0 = [k_1, \dots, k_n] \) be the least common multiple of the moduli \( k_1, \dots, k_n \), 
and let \( \chi^0 \) denote the principal character modulo \( q \).
Define   
\begin{align}\label{def:newNu}
\nu(q; \chi_1, \ldots, \chi_n)
= \sum_{\substack{1 \leq \boldsymbol{a} \le q \\ (a_1, \ldots, a_R, q) = 1}} 
\sum_{\boldsymbol{h} \bmod{q}} \bar{\chi}_1\chi^0(h_1) \cdots \bar{\chi}_n\chi^0(h_n)
e\bigg( \frac{\boldsymbol{a} \cdot \boldsymbol{F}(\boldsymbol{h})}{q} \bigg).
\end{align}
If 
\begin{align}\label{def:sLX}
n \geq D^24^{D+2}R^5, 
\end{align}
then there exists a constant $\delta>1$ such that 
\begin{equation*}
\sum_{\substack{q \leq Q \\ k_0 \mid q}} \frac{1}{\varphi^n(q)} |\nu(q; \chi_1, \dots, \chi_n)| \ll k_0^{-\delta} \log^n Q.  
\end{equation*}
\end{lemma}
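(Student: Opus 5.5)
We bound each $\nu(q;\boldsymbol{\chi})$ with Corollary \ref{thm2}, after reducing to characters modulo $q$ and organizing the sum over $\boldsymbol{a}$ by the gcd structure. Then we sum over $q$ with $k_0\mid q$, using the large lower bound \eqref{def:sLX} on $n$ to make the exponent exceed $n+1$ by a margin.

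\textbf{Step 1: reduction to characters modulo $q$.} Each $\bar\chi_i\chi^0$ is a Dirichlet character modulo $q$, so the inner sum in \eqref{def:newNu} is exactly $C_{\boldsymbol{F}}(q,\boldsymbol{a};\bar{\boldsymbol{\chi}}\chi^0)$ in the notation \eqref{exp with chi}. We fix $j\in\Delta$; the simplest choice is $j=D$, or one takes the minimum as in Corollary \ref{thm2}. For each $\boldsymbol{a}$ with $(\boldsymbol{a},q)=1$, Corollary \ref{thm2} gives
\[
|C_{\boldsymbol{F}}(q,\boldsymbol{a};\bar{\boldsymbol{\chi}}\chi^0)|\ll q^{n+\varepsilon}\Big(\frac{g}{q}\Big)^{\kappa},\qquad g=\gcd(q,\boldsymbol{a}^{(j)},\ldots,\boldsymbol{a}^{(D)}),\quad \kappa=\frac{n-R}{2^{2D+1}(2D-1)(R+1)R}.
\]
\textbf{Step 2: summing over $\boldsymbol{a}$.} We sum over $\boldsymbol{a}\bmod q$ by grouping according to the value $g\mid q$. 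The number of $\boldsymbol{a}$ with a given $g$ is at most $q^{R-r}(q/g)^{r}$, where $r=r_j+\cdots+r_D\ge1$, and the divisor sum contributes $q^\varepsilon$. Since $n\ge D^24^{D+2}R^5$ makes $\kappa$ very large compared to $R$ (indeed $\kappa\gg D^2R^2$), we may crudely bound the count of $\boldsymbol{a}$ by $q^R$. Taking $g=1$ as the worst case would need care, so instead we bound $(g/q)^\kappa\le g^{-\kappa'}$-type factors. The cleanest route is to use the full exponent only when $g$ is small, and otherwise to note that $g\mid q$ with $(\boldsymbol{a},q)=1$. If $j$ is the lowest degree, then $\boldsymbol{a}^{(j)},\ldots,\boldsymbol{a}^{(D)}$ is all of $\boldsymbol{a}$, so $g=1$. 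With that choice,
\[
|\nu(q;\boldsymbol{\chi})|\ll q^{R+n+\varepsilon-\kappa}.
\]
\textbf{Step 3: summing over $q$.} Dividing by $\varphi^n(q)\gg q^{n-\varepsilon}$ leaves $q^{R-\kappa+2\varepsilon}$. Writing $q=k_0m$ and summing over $m$ gives
\[
\sum_{k_0\mid q}q^{R-\kappa+2\varepsilon}\le k_0^{R-\kappa+2\varepsilon}\sum_m m^{R-\kappa+2\varepsilon}\ll k_0^{-\delta},
\]
with $\delta=\kappa-R-2\varepsilon>1$ as soon as $\kappa>R+1+3\varepsilon$. The $\log^nQ$ factor then comes for free, or from the divisor bounds if one prefers to avoid $\varepsilon$.

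\textbf{Main obstacle: the numerical verification.} It remains to check that \eqref{def:sLX} forces $\kappa>R+1$, i.e.
\[
n-R>(R+1)^2R\,2^{2D+1}(2D-1).
\]
The right side is at most $4R^3\cdot 2\cdot4^D\cdot2D\le 16DR^3 4^D$, which is well below $D^24^{D+2}R^5-R$. So the inequality holds with a large margin, and this margin can also absorb any loss if one instead works with the sharper Theorem \ref{prop gen Gau} together with Lemmas \ref{Lem/7/1} and \ref{dim/rel}. The genuinely delicate point is ensuring that Corollary \ref{thm2} applies uniformly to the composite characters $\bar\chi_i\chi^0$ with implied constants independent of the characters. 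I would check this by tracing the proof of Theorem \ref{prop gen Gau}: the characters disappear after the two applications of Cauchy's inequality, so the bound is uniform.
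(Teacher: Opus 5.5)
Your proof is correct, and it takes a simpler route than the paper's.

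\emph{The paper's route.} It follows Browning--Heath-Brown. It stratifies $\boldsymbol{a}$ by the profile $d_j=\gcd(q,\boldsymbol{a}^{(j)},\ldots,\boldsymbol{a}^{(D)})$ and applies Theorem \ref{prop gen Gau}. It then replaces the minimum by a weighted product with weights $\lambda_j$, chosen so that the factor $(d_j/q)^{r_j}$ offsets the count $(q/d_j)^{r_j}$ of admissible $\boldsymbol{a}^{(j)}$. What remains is a saving $q^{-\theta/s_{2j_0}}$, and this needs
\[
s_{2j_0}(4n,\boldsymbol{L})+\sum_i r_i s_{2i}(4n,\boldsymbol{L})<\tfrac14 .
\]

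\emph{Your route.} You evaluate the minimum in Corollary \ref{thm2} at the lowest degree $j_0$, where $d_{j_0}=\gcd(q,\boldsymbol{a})=1$. Every $\boldsymbol{a}$ then gets the full saving $q^{-\kappa}$, you count the $\boldsymbol{a}$ trivially by $q^R$, and you need only $\kappa>R+1$.

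\emph{Comparison.} In principle the stratified condition is weaker than yours, since $\sum_i r_i s_{2i}\le R\,s_{2j_0}$, with slack when the higher-degree $s_{2i}$ are small. The paper discards that slack, however. It bounds every $s_{2i}$ by $A_1(4n,\boldsymbol{L})$ via Lemma \ref{Lem/7/1} and then uses Lemma \ref{dim/rel}, arriving at $n-\dim V^*_{\boldsymbol{F}}>2^{2D+1}(2D-1)(R+1)^2R$. Given $\dim V^*_{\boldsymbol{F}}\le R$, this is precisely your condition $\kappa>R+1$. So under the stated hypothesis your argument proves the same result with the same inputs and no gcd bookkeeping. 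It also sidesteps the paper's interpolation step, which mixes the exponents $s_{2j}$ and $s_{2j_0}$ in a way that needs repair. The stratified framework would pay off only with finer control of the individual $s_{2i}$.

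Two minor points:
\begin{enumerate}
\item The parenthetical claim $\kappa\gg D^2R^2$ is false when $D$ is large compared with $R$; for $R=1$ one has $\kappa\approx 2D$. You never use it, and your final numerical check $n-R>2^{2D+1}(2D-1)(R+1)^2R$ is correct.
\item The exploratory remarks in Step 2 (taking $j=D$, the ``$g^{-\kappa'}$-type factors'') can simply be deleted.
\end{enumerate}

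Your uniformity concern is settled exactly as you say: the characters vanish after the two applications of Cauchy's inequality in the proof of Theorem \ref{prop gen Gau}.
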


This improves \cite[Lemma 8.1]{LiuXie}, which required $n\ge D^2 4^{D+6}R^5$. The improvement stems from the new bound 
for multiple Gauss sums in Theorem~\ref{prop gen Gau} and 
the general dimensional relationships between specific projective varities in Lemma~\ref{dim/rel}. 

\begin{proof} 
The inner sum over $\boldsymbol{h}$ in \eqref{def:newNu} equals 
$C_{\boldsymbol{F}}(q, \boldsymbol{a}; \bar{\chi}_1\chi^0,\ldots, \bar{\chi}_n\chi^0)$ 
as in \eqref{exp with chi}.  
We next employ the argument similar to that between Lemma 8.2 and 8.3 in \cite{BroHB}.
Set $d_j=\gcd \, (q, \boldsymbol{a}^{(j)},\ldots,\boldsymbol{a}^{(D)})$ for each $j \in \Delta$.
Suppose that $j_0$ is the least index $j \in \Delta$.
Then $d_{j_0}=1$ since $\gcd \, (q, \boldsymbol{a})=1$.
Moreover we have $d_j|q$ for every $j \in \Delta$.
The number of $\boldsymbol{a}^{(j)} (\bmod \, q)$ associated to a given $d_j$ is $(\frac{q}{d_j})^{r_j}$.
And the total number of $d_1,\ldots,d_D$ associated to a given $q$ is at most $\tau(q)^D\ll q^\varepsilon$.
Next we note that
$$
\min_{j\in \Delta} \bigg( \frac{d_j}{q}\bigg) ^{\frac{1}{4s_{2j_0}(4n,\boldsymbol{L})}} \leq \prod_{j \in \Delta} \bigg( \frac{d_j}{q}\bigg)^{\frac{\lambda_j}{s_{2j_0}(4n,\boldsymbol{L})}}
$$
for all 
$s_{2j_0}(4n,\boldsymbol{L})$ as in Theorem \ref{prop gen Gau} and 
for nany real numbers $\lambda_j\geq 0$ such that $\sum_{j \in \Delta}\lambda_j=\frac{1}{4}.$
We will apply this with
$$
\lambda_j:=
\begin{cases}
   \theta+r_{j_0}s_{2j_0}(4n,\boldsymbol{L}), &\mbox{if $j=j_0$,} \\
   r_js_{2j}(4n,\boldsymbol{L}), &\mbox{if $j \in \Delta \setminus \{ j_0\}$,}
\end{cases}
$$
where 
$$
\theta=\frac{1}{4}-\sum^D_{i=1}s_{2i}(4n,\boldsymbol{L})r_i. 
$$ 
We claim that \eqref{def:sLX} implies 
\begin{equation}\label{</claim}
s_{2j_0}(4n,\boldsymbol{L})+\sum^D_{i=1}s_{2i}(4n,\boldsymbol{L})r_i<\frac{1}{4}.
\end{equation}
It follows from Theorem \ref{prop gen Gau} that
\begin{align*} 
\nu(q; \chi_1, \dots, \chi_n)
& \ll q^{n+\varepsilon} \sum_{d_1,\ldots,d_D|q} \bigg(\frac{1}{q}\bigg)^{\frac{\theta}{s_{2j_0}(4n,\boldsymbol{L})}} \prod_{j\in \Delta} \bigg(\frac{q}{d_j} \bigg)^{r_j} \bigg( \frac{d_j}{q} \bigg)^{r_j}  \\ 
& \ll q^{n-\frac{\theta}{s_{2j_0}(4n,\boldsymbol{L})}+\varepsilon} \ll q^{n-\delta} 
\end{align*} 
with $\delta>1$ a constant, proving the desired result.

Now we prove the claim that \eqref{def:sLX} implies \eqref{</claim}. 
By Lemma \ref{Lem/7/1},
\begin{equation*}
s_{2C}(4n,\boldsymbol{L})+\sum^D_{i=1}s_{2i}(4n,\boldsymbol{L})r_i \leq (R+1)\frac{2^{2D-1}(2D-1)R}{4n-\dim V^*_{\boldsymbol{L}}(4n)}.
\end{equation*}
Then, by Lemma \ref{dim/rel}, 
\begin{equation*}
 s_{2C}(4n,\boldsymbol{L})+\sum^D_{i=1}s_{2i}(4n,\boldsymbol{L})r_i \leq \frac{2^{2D-1}(2D-1)(R+1)^2R}{n-\dim V^*_{\boldsymbol{F}}(n)}.
\end{equation*}
By \eqref{def:sLX} and $\dim V^*_{\boldsymbol{F}}(n)\leq R$, we get
$$
n\geq D^24^{D+2}R^5> 2^{2D+1}(2D-1)(R+1)^2R+\dim V^*_{\boldsymbol{F}}(n), 
$$
and hence 
$$ 
s_{2C}(4n,\boldsymbol{L})+\sum^D_{i=1}s_{2i}(4n,\boldsymbol{L})r_i<\frac{1}{4}, 
$$ 
proving the claim. 
\end{proof}

Using this lemma in place of \cite[Lemma~8.1]{LiuXie}, we obtain
\[
\int_{\mathfrak{M} } S_{F}(\alpha) d\alpha
\sim 
 \mathfrak{S}_{F} \mathfrak{J}_{F} X^{n-\mathcal{D}} 
\]
under \eqref{def:sLX}, the same as \eqref{Lem81/n>}. 
Combined with \eqref{min/est} under \eqref{Lem81/n>}, this proves Theorem~\ref{thm:main}. 

\section*{Acknowledgments}
The authors thank Yang Cao, Lei Fu, Philippe Michel, Daqing Wan, Shuntaro Yamagishi, and Dingxin Zhang for 
helpful discussions on multiple exponential sums and algebraic geometry. 
The authors are grateful to Kien Huu Nguyen for pointing out an issue arising in the application of \cite{Ngu21} in an earlier version of this manuscript. 
In the present version, we avoid reliance on \cite{Ngu21}.  
This work was supported by the National Key Research and Development Program of China (No. 2021YFA1000700) 
and the National Natural Science Foundation of China (No. 12031008).

\end{document}